\DeclareFontFamily{U}{wncy}{}
    \DeclareFontShape{U}{wncy}{m}{n}{<->wncyr10}{}
    \DeclareSymbolFont{mcy}{U}{wncy}{m}{n}
    \DeclareMathSymbol{\Sha}{\mathord}{mcy}{"58}
\newtheorem{theorem}{Theorem}[section]
\newtheorem{corollary}[theorem]{Corollary}
\newtheorem{lemma}[theorem]{Lemma}
\newtheorem{proposition}[theorem]{Proposition}
\newtheorem{hypothesis}[theorem]{Hypothesis}
\theoremstyle{definition}
\newtheorem{remark}[theorem]{Remark}
\newtheorem{examples}[theorem]{Examples}
\title{On the existence of Minkowski units}
\begin{document}

\author{David Burns, Donghyeok Lim and Christian Maire}

\begin{abstract} We investigate the Galois structure of algebraic units in cyclic extensions of number fields and thereby obtain strong new results on the existence of independent Minkowski $S$-units. \end{abstract} 

\address{King's College London,
Department of Mathematics,
London WC2R 2LS,
U.K.}
\email{david.burns@kcl.ac.uk}

\address{FEMTO-ST Institute, Universit\'{e} Franche-Comt\'{e}, CNRS,  15B avenue des Montboucons, 25000 Besan\c{c}on, FRANCE}
\email{donghyeokklim@gmail.com}

\address{FEMTO-ST Institute, Universit\'{e} Franche-Comt\'{e}, CNRS,  15B avenue des Montboucons, 25000 Besan\c{c}on, FRANCE}
\email{christian.maire@univ-fcomte.fr}
\subjclass[2000]{11R33, 11R34, 11R37}
\keywords{Minkowski units, Galois structure of algebraic units, Krull-Schmidt decomposition, Yakovlev diagram}
\maketitle

\section{Introduction}
We fix a finite Galois extension of number fields $L/K$, with $G := \mathrm{Gal}(L/K)$, and an odd prime divisor $p$ of the order of $G$ and write $\mathbb{F}_p$ for the finite field of cardinality $p$. For a finite set of places $S$ of $K$, we write $\mathcal{O}_{L,S}$ for the ring of algebraic $S$-integers of $L$ and, with $\mu_L$ denoting the group of roots of unity in $L$, define a $\mathbb{Z}_p[G]$-lattice by setting $U_{L,S} := \mathbb{Z}_p\otimes_\mathbb{Z} (\mathcal{O}_{L,S}^\times/\mu_L)$. 

If the $G$-module $U_{L,S}/U_{L,S}^p$ has a direct summand isomorphic to $\mathbb{F}_p[G]^{m}$ for a natural number $m$, then one says $L/K$ has a family of `$m$ independent Minkowski $S$-units'. In particular, by the Krull-Schmidt Theorem, the maximum size $m_{L/K,S}$ of a family of independent Minkowski $S$-units for $L/K$ is well-defined. Recent work in \cite{HMR1} has shown that $m_{L/K,S}$ plays an important role in the study of both tamely ramified pro-$p$ extensions and the deficiency of $p$-class tower groups and also, following work of Ozaki \cite{Ozaki}, of the inverse Galois problem for the $p$-class field tower (cf. \cite{HMR2}). Unfortunately, however, the determination of $m_{L/K,S}$ appears, in general, to be a very difficult problem.


In this note we identify conditions under which one can `bound' the complexity of the $\mathbb{Z}_p[G]$-lattice $U_{L,S}$ and thereby deduce new results on $m_{L/K,S}$. Here we recall that understanding the explicit structure of arithmetic lattices is a notoriously difficult problem, not least because, by well-known results of Heller and Reiner \cite{hr, HellerReiner2}, the relevant theory of integral representations is usually extremely complicated. 

To recall the most general (as far as we are aware) result in this direction, we fix an abstract finite group $\Gamma$, a finite set of places $\Sigma$ of $K$ containing all $p$-adic places and a $p$-adic Galois representation $T$ over $K$ unramified outside $\Sigma$. Then \cite[Th. 1.1]{Burns} proves the existence of an upper bound on the number of isomorphism classes of indecomposable modules that occur in the Krull-Schmidt decompositions of the $\mathbb{Z}_p[\Gamma]$-lattices arising from the $p$-adic \'etale cohomology groups $H^i({\rm Spec}(\mathcal{O}_{L',\Sigma})_{{\rm \acute e
t}},T) \cong H^i({\rm Spec}(\mathcal{O}_{K',\Sigma})_{{\rm \acute e
t}},\mathbb{Z}_p[\Gamma]\otimes_{\mathbb{Z}_p}T)$ as $L'/K'$ ranges over extensions of $K$ for which $L'/K'$ is unramified outside $\Sigma$, Galois and such that $\mathrm{Gal}(L'/K')$ identifies with $\Gamma$. In particular, in the case ($i=1$ and $T = \mathbb{Z}_p(1))$  relevant to us, this result relates to the module $U_{L,S}$ only if $S$ contains all places that are $p$-adic or ramify in $L$ and gives bounds depending on $\Gamma$ and the number of places of $L$ that are $p$-adic or divide the different of $L/K$. 

In contrast, by combining detailed class-field theoretic arguments together with algebraic results of Yakovlev \cite{Yakovlev1}, we shall now obtain the finer information about $U_{L,S}$ that is contained in the following result. In this result, for each natural number $n$ we write $\mathcal{C}_n$ for the (countably infinite) collection of pairs $(L/K,S)$ comprising a Galois extension of number fields $L/K$ for which $G$ identifies with the cyclic group $\mathbb{Z}/p^n$ of order $p^n$ and ${\rm Norm}_{L/K}(\mu_L) = \mu_K$, and a finite set $S$ of places of $K$ for which the $S$-ideal $p$-class group of every intermediate field of $L/K$ is cyclic. 

\begin{theorem}\label{intro thm} Fix a natural number $n$. Then, as $(L/K,S)$ ranges over $\mathcal{C}_n$, only finitely many isomorphism classes of indecomposable $\mathbb{Z}_p[(\mathbb{Z}/p^n)]$-lattices arise as direct summands of any $U_{L,S}$. \end{theorem}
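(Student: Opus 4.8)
The plan is to combine Yakovlev's classification of $\mathbb{Z}_p[\mathbb{Z}/p^n]$-lattices with a tower-wide class field theory computation. Write $G=\mathbb{Z}/p^n$, fix a generator $\sigma$, and let $G=G_0\supset G_1\supset\cdots\supset G_n=1$ with $G_i=\langle\sigma^{p^i}\rangle$; for $0\le i\le n$ put $F_i=L^{G_i}$, so $F_0=K$ and $F_n=L$. Yakovlev's theorem attaches to a $\mathbb{Z}_p[G]$-lattice $M$ a \emph{Yakovlev diagram} $D(M)$ — a finite diagram formed from the Tate cohomology groups $\widehat{H}^{j}(G_i,M)$ and certain associated $\mathbb{Z}_p[G]$-modules, together with the natural connecting, restriction and corestriction maps relating consecutive levels — with the properties that $D$ is additive and that $D(M)\cong D(M')$ forces $M\cong M'$. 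I would use this to reduce the Theorem to the following claim: as $(L/K,S)$ ranges over $\mathcal{C}_n$, the diagram $D(U_{L,S})$ is, up to adding copies (with unrestricted multiplicities) of a fixed finite family of \emph{elementary} indecomposable diagrams — namely the diagrams of the permutation lattices $\mathbb{Z}_p[G/G_i]$, of the cyclotomic-type lattices $\mathbb{Z}_p[G]/\Phi_{p^\ell}(\sigma)$, and of finitely many standard amalgams of these — a direct summand whose isomorphism class lies in a finite set depending only on $n$. Granting this, the indecomposable summands of any $U_{L,S}$ come either from the fixed finite family of elementary lattices or from the finite set of \emph{residual} lattices, so only finitely many occur; here one uses the Krull--Schmidt theorem, valid for lattices over the $\mathbb{Z}_p$-order $\mathbb{Z}_p[G]$ since $\mathbb{Z}_p$ is complete.

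To extract the elementary part and bound the residue I would use class field theory, exploiting the two hypotheses defining $\mathcal{C}_n$. First, since each $\mathrm{Cl}_S(F_i)_p$ is cyclic, enlarge $S$ by at most $n+1$ primes of $K$ — one whose class generates each $\mathrm{Cl}_S(F_i)_p$ — to a set $S'$ with $\mathrm{Cl}_{S'}(F_i)_p=0$ for all $i$; the localisation exact sequences $0\to U_{L,S}\to U_{L,S'}\to\bigoplus_{v\in S'\setminus S}\mathbb{Z}_p[G/G_v]\to\mathrm{Cl}_S(L)_p\to 0$, and their analogues over each $F_i$, change $D(U_{L,S})$ only by bounded, harmless pieces. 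For the enlarged datum, a Tate sequence valid for the given $S'$ — of the shape $0\to\mathbb{Z}_p\otimes\mathcal{O}_{L,S'}^\times\to A\to B\to\nabla\to 0$ with $A,B$ cohomologically trivial $\mathbb{Z}_p[G]$-lattices, hence \emph{free}, since $\mathbb{Z}_p[\mathbb{Z}/p^n]$ is a local ring — identifies $D(\mathbb{Z}_p\otimes\mathcal{O}_{L,S'}^\times)$ with a degree shift of $D(\nabla)$, compatibly with all restriction and corestriction maps; and the module $\nabla$ is assembled from the permutation module on the places of $L$ above $S'$, from the local groups $\widehat{H}^{*}(G_v,\mathcal{O}_{L_w}^\times)$ at the ramified places $v$ of $L/K$ (each cyclic of order dividing $p^n$), and from the now-trivial groups $\mathrm{Cl}_{S'}(F_i)_p$. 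One then checks that the permutation part and the local ramification parts of $\nabla$ contribute to $D(\nabla)$ only the elementary diagrams above, so that $D(\mathbb{Z}_p\otimes\mathcal{O}_{L,S'}^\times)$, and hence $D(U_{L,S})$, is elementary up to a residue controlled by the groups $\mathrm{Cl}_S(F_i)_p$ reintroduced by the localisation sequences and by $\mu_L$; here the hypothesis $\mathrm{Norm}_{L/K}(\mu_L)=\mu_K$ is used to cancel the root-of-unity correction terms in Chevalley's ambiguous class number formula and in the comparison of $\mathbb{Z}_p\otimes\mathcal{O}_{L,S}^\times$ with $U_{L,S}$, leaving a residue whose isomorphism type is bounded in terms of $n$ alone.

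The Yakovlev formalism, the reduction to the case of trivial class groups, and the localisation bookkeeping are routine; the substance, and the main obstacle, is the core of the second paragraph. One must prove a form of the Tate sequence sharp enough to see, simultaneously at every level $G_i$, that the a priori unbounded contributions of the places of $S$ and of the ramified places enter $D(U_{L,S})$ only through elementary indecomposable summands, the genuinely non-elementary and unbounded-looking data collapsing onto the cyclic groups $\mathrm{Cl}_S(F_i)_p$. This requires a fairly precise analysis of Yakovlev's diagram category — in particular that its elementary objects behave as relative injectives and projectives, so that extensions by them produce no new indecomposable diagram summands — together with a uniform, tower-wide computation of the diagram's connecting maps in terms of reciprocity maps and ideal classes.
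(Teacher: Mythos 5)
Your overall architecture coincides with the paper's: reduce the problem to controlling the Yakovlev diagram $\Delta(U_{L,S})$ built from the groups $H^1(\Gamma_i,U_{L,S})$ with restriction and corestriction maps, use Yakovlev's theorem (correctly: the diagram determines the lattice only up to summands $\mathbb{Z}_p[\Gamma/\Gamma_i]$, not on the nose as you first state), and then compute the diagram by class field theory. The problem is that the one step you yourself flag as ``the substance, and the main obstacle'' --- that the a priori unbounded contribution of the ramified places enters the diagram only through a fixed finite family of elementary indecomposable summands --- is the entire content of the theorem, and your route to it does not work as described. First, the Tate sequence you invoke, with $A,B$ cohomologically trivial and $\nabla$ ``assembled from'' permutation modules and local unit cohomology, requires $S'$ to contain all ramified and archimedean places; your $S'$ (obtained by adding at most $n+1$ primes to kill the cyclic class groups) does not contain them, and the small-$S$ variants produce a $\nabla$ whose integral structure is exactly the open question. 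Second, the assertion that the elementary objects ``behave as relative injectives and projectives, so that extensions by them produce no new indecomposable diagram summands'' is false: extensions of lattices by permutation lattices are precisely the mechanism by which non-permutation indecomposables arise, and the non-elementary lattices in play (the paper's $M_{a,b}=\mathbb{Z}_p[\Gamma](p^a,\sigma^{p^c}-1)$, not quotients by cyclotomic polynomials) are neither projective nor injective. So even granting a Tate sequence, knowing the pieces from which $\nabla$ is assembled does not bound its indecomposable summands.

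The paper closes this gap by a direct ideal-theoretic argument in which the cyclicity hypothesis (C1) is used in a way your proposal misses: you use cyclicity only to enlarge $S$, whereas the proof uses it at every intermediate field $E=L^J$ to relate the ideal classes of distinct ramified primes to one another. Concretely, (C2) and Hilbert 90 identify $H^1(J,U_{L,S})$ with $(P_{L,S})^J/P_{E,S}$, a group of classes of $J$-invariant ideals generated by a single ideal $\mathfrak{q}_E$ generating $A_{E,S}$ together with the ideals $\mathfrak{p}_{L/E}$ for $\mathfrak{p}\in R_{L/K}$. For any three ramified primes with the same inertia and decomposition groups $(H,H')$, the cyclicity of $A_{L,S}$ and of $A_{E,S}$ lets one write down explicit principal ideals $\mathfrak{b}_{k,E}$ whose classes generate direct summands isomorphic to $(\mathbb{Z}/|J\cap H|)[\Gamma/JH']$, and these summands are compatible with restriction and corestriction as $J$ varies. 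This splits $\Delta(U_{L,S})$ as $\Delta_1(U_{L,S})\oplus\Delta\bigl(\bigoplus_{(H,H')}M_{H,H'}^{(t^{H,H'}_{L/K}-2)}\bigr)$ with $\Delta_1$ of order bounded in terms of $p$ and $n$ alone, and the theorem follows from Yakovlev's result and Krull--Schmidt. Without an argument of this kind --- an explicit, res/cor-compatible splitting of the cohomology, rather than a formal injectivity claim --- your proposal does not yield the finiteness assertion.
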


At the outset, we note that the conclusion of this result is, a priori, far from clear since the $\mathbb{Z}_p$-rank of $U_{L,S}$ is unbounded as $(L/K,S)$ ranges over $\mathcal{C}_n$, whilst, if $n > 2$, then there exist infinitely many non-isomorphic indecomposable $\mathbb{Z}_p[(\mathbb{Z}/p^n)]$-lattices (cf. \cite{HellerReiner2}). In addition, Theorem \ref{intro thm} is stronger than the corresponding case of \cite[Th. 1.1]{Burns} since, firstly, its conclusion does not require $S$ to contain all places that are either $p$-adic or ramify in $L$ and, secondly, its proof gives more information on the occurring indecomposable modules and thereby leads both to sharper bounds on the number of such isomorphism classes and also, upon appropriate specialisation, to some very concrete consequences. For example, if the $p$-Hilbert $S$-class field of $L$ is cyclic over $K$, then it implies the $\mathbb{Z}_p[G]$-structure of $U_{L,S}$ depends only on the ramification and residue degrees of places of $K$ that are ramified in $L$ or belong to $S$ and can even be described completely explicitly if $L/K$ is unramified  (see Theorem \ref{last thm}). 

These improvements also mean that Theorem \ref{intro thm} can be used to deduce the existence of families of extensions in which $m_{L/K,S}$ is unbounded even though the set of places ramifying in $L/K$ remains small and contains no place that is tamely ramified, thereby complementing the constructions of \cite{HMR3}. (For details see Corollary \ref{cft remark} and Examples \ref{last exams}).

We remark that several aspects of the techniques developed here can be extended to more general classes of extensions $L/K$ (thereby further refining the general approach of \cite{Burns}). Such results are discussed both in the article \cite{BouzzaouiLim} of Bouazzaoui and the second author and in forthcoming work \cite{KumonLim} of Kumon and the second author. 


Finally, for the reader's convenience, we record some general notation. For a Galois extension of fields $F/E$, we abbreviate $\mathrm{Gal}(F/E)$ to $G(F/E)$. For a finitely generated $\mathbb{Z}_p$-module $M$ we write $\mathrm{rk}(M)$ for its rank $\mathrm{dim}_{\mathbb{Q}_p}(\mathbb{Q}_p \otimes_{\mathbb{Z}_p} M)$. For an abelian group $X$ we set $X_p := \mathbb{Z}_p\otimes_{\mathbb{Z}}X$. 
For a natural number $n$, we set $[n]:= \{i\in \mathbb{Z}:1\le i\le n\}$ and $[n]^\ast := \{0\}\cup [n]$.

\vskip 15pt
\textbf{Acknowledgement} We are very grateful to Ozaki Manabu and Ravi Ramakrishna for their interest in our work and helpful comments.

\section{Hypotheses and examples}
At the outset we fix an odd prime number $p$. For an extension of number fields $L/K$ and finite set $S$ of places of $K$, we write  $A_{L,S}$ for the Sylow $p$-subgroup of the $S$-ideal class group of $L$ (that is, the quotient of the ideal class group of $L$ by the subgroup generated by the classes of prime ideals lying above $S$), $H_{L,S}$ for the $p$-Hilbert $S$-class field of $L$ (that is, the maximal unramified abelian $p$-extension of $L$ in which all places of $L$ above $S$ split completely) so that $A_{L,S}$ is canonically isomorphic to $G(H_{L,S}/L)$, and $R_{L/K}$ for the set of places of $K$ that ramify in $L$. If $S=\emptyset$, then we abbreviate $H_{L,S}$, $A_{L,S}$ and $U_{L,S}$ to $H_L$, $A_L$ and $U_L$ respectively. We also write $K_S$ for the maximal pro-$p$ extension of $K$ unramified outside $S$ and set $G_{K,S} := \mathrm{G}(K_{S}/K)$. 


We now fix the following data: 
\begin{equation}\label{data fix}
\begin{cases}&\text{a finite cyclic $p$-extension of number fields $L/K$ with Galois group $G$};\\
             &\text{a finite set $S$ of places of $K$ with $S \cap R_{L/K} = \emptyset$.}\end{cases}\end{equation}
We assume that this data satisfies the following hypothesis. 

\begin{hypothesis}\label{key hyp}\ 
\begin{itemize}
\item[(C1)] For every intermediate subfield $E$ of $L/K$, the group $A_{E,S}$ is cyclic.
\item[(C2)] ${\rm Norm}_{L/K}(\mu_{L}) = \mu_K$.
\end{itemize}
\end{hypothesis}

\begin{remark}\label{herbrand rem} Condition (C2) has a conceptual interpretation: since $G$ is a cyclic $p$-group and $\mu_L$ is finite, a Herbrand quotient argument can be combined with general results (cf. \cite[Th. 5 and Th. 9]{aw}) to show (C2) is satisfied if and only if $\mu_L$ is a cohomologically-trivial $G$-module. In addition, since $p$ is odd, a straightforward analysis also shows that the latter condition is satisfied if and only if either the Sylow $p$-subgroup $\mu_{K,p}$ of $\mu_K$ is trivial or one has $L=K(\mu_{L,p})$ (see, for example, \cite[Lem. 5.4.4(1)]{Popescu}). 
\end{remark}

It is clear that, for any given $L/K$, the Chebotarev Density Theorem implies that one can simply increase the set $S$ in order to satisfy (C1). On the other hand, for several natural families of extensions $L/K$, such as in the following examples, (C1) is satisfied with $S=\emptyset$. 

\begin{examples} In each of the following cases, the extension $L/K$ is tamely ramified.  

\noindent{}(i) Assume $A_K$ is cyclic and non-trivial. By the Burnside Basis Theorem, $G_{K,\emptyset}$ is pro-cyclic and hence abelian. Therefore the $p$-class field tower of $K$ terminates at $H_K$ and so $H_E=H_K$ for any unramified $p$-extension $E$ of $K$. Hence, if $L\subseteq H_K$, then $(L/K,\emptyset)$ satisfies (C1).\

\noindent{}(ii) Assume $A_K$ is trivial and set $r_K := \mathrm{dim}_{\mathbb{F}_p}(\mathcal{O}_K^\times/(\mathcal{O}_K^\times)^p)$. Then, for any $s\in [r_K+1]$, the Gras-Munnier Theorem (cf. \cite[Prop. 3.1]{GrasMunnier}, \cite{GrasMunnier2}) implies the existence of infinitely many sets $\Sigma$ of  non-archimedean, non $p$-adic, places of $K$ for which $|\Sigma| = s$, $G_{K,\Sigma}$ is a non-trivial cyclic group and $G_{K,\Sigma'}$ is trivial for all 
$\Sigma' \subsetneq \Sigma$. In this case every place in $\Sigma$ is totally ramified in $K_{\Sigma}$ and so, for any intermediate field $L$ of $K_{\Sigma}/K$, one has $A_L = (0)$ so that $(L/K,\emptyset)$ satisfies (C1). \end{examples}

More generally, the following observation leads to many examples in which (C1) is satisfied and $S$ does not contain all places that are either $p$-adic or ramify in $L$.   


\begin{lemma}\label{(C1)} Let $L/K$ and $S$ be as in (\ref{data fix}). If there exists a place $\mathfrak{q}$ of $K$ that does not split in $H_{L,S}$, then the following claims are valid.
\begin{itemize}
\item[(i)] $A_{E,S}$ is cyclic (so that $(L/K,S)$ has property (C1)).
\item[(ii)] $A_{E,S}$ is generated by the unique prime $\mathfrak{q}_E$ of $E$ above $\mathfrak{q}$.
\item[(iii)] $G(E/K)$ acts trivially on $A_{E,S}$.
\end{itemize}
\end{lemma}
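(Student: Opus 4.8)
The plan is to read the hypothesis as saying that there is exactly one prime $\mathfrak{Q}$ of $H_{L,S}$ lying above $\mathfrak{q}$, and then to deduce all three claims from the behaviour of $\mathfrak{q}$ inside the everywhere-unramified abelian extensions $H_{E,S}/E$.

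First I would establish the inclusion $H_{E,S}\subseteq H_{L,S}$ for every intermediate field $E$ of $L/K$. For this one checks that $H_{E,S}L/L$ is an unramified abelian $p$-extension in which every place of $L$ above $S$ splits completely: unramifiedness is preserved under base change from $E$ to $L$; the group $G(H_{E,S}L/L)$ embeds by restriction into the abelian group $G(H_{E,S}/E)$; and any place of $L$ above $v\in S$ lies over a place of $E$ that splits completely in $H_{E,S}/E$, hence splits completely in $H_{E,S}L/L$. The maximality property defining $H_{L,S}$ then forces $H_{E,S}L\subseteq H_{L,S}$, and so $H_{E,S}\subseteq H_{L,S}$.

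Next, writing $\mathfrak{q}_E:=\mathfrak{Q}\cap E$ and $\mathfrak{Q}_E:=\mathfrak{Q}\cap H_{E,S}$, the uniqueness of $\mathfrak{Q}$ forces $\mathfrak{q}_E$ to be the unique prime of $E$ above $\mathfrak{q}$ (the first half of (ii)) and $\mathfrak{Q}_E$ to be the unique prime of $H_{E,S}$ above $\mathfrak{q}_E$. Since $H_{E,S}/E$ is unramified, the decomposition group of $\mathfrak{Q}_E$ in $G(H_{E,S}/E)$ is cyclic, generated by the Frobenius attached to $\mathfrak{q}_E$; and since $\mathfrak{Q}_E$ is the only prime above $\mathfrak{q}_E$, this decomposition group equals all of $G(H_{E,S}/E)$. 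Transporting through the Artin isomorphism $A_{E,S}\cong G(H_{E,S}/E)$, under which the Frobenius at $\mathfrak{q}_E$ corresponds to the class $[\mathfrak{q}_E]$, I conclude that $A_{E,S}$ is cyclic and generated by $[\mathfrak{q}_E]$; this gives (i) and (ii). For (iii), any $\sigma\in G(E/K)$ permutes the primes of $E$ above $\mathfrak{q}$ and hence fixes the unique one $\mathfrak{q}_E$, so $\sigma$ fixes $[\mathfrak{q}_E]$ and therefore acts trivially on the group it generates, namely $A_{E,S}$.

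I do not expect a genuine obstacle here. The two points that require a little care are the precise meaning of ``does not split'' — which has to be that $\mathfrak{q}$ has a single prime above it in $H_{L,S}$, rather than merely failing to split completely — and the verification that $H_{E,S}L/L$ inherits the three defining properties of a subextension of $H_{L,S}$, the least transparent being preservation of unramifiedness and of complete splitting at $S$-places under base change to $L$, which is a routine local computation. Once $H_{E,S}\subseteq H_{L,S}$ is in hand, the whole statement reduces to the elementary observation that a prime admitting a unique prime above it in an everywhere-unramified abelian extension has Frobenius generating the full Galois group.
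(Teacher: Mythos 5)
Your proposal is correct and follows essentially the same route as the paper: both rest on the inclusion $H_{E,S}\subseteq H_{L,S}$ (which the paper asserts and you verify via the base-change argument for $H_{E,S}L/L$) and on the observation that a non-split place in an unramified abelian extension has full, hence cyclic Frobenius-generated, decomposition group, with (ii) then coming from the Artin map and (iii) from the $G(E/K)$-invariance of the unique prime $\mathfrak{q}_E$. The only difference is cosmetic: the paper phrases (i) contrapositively, while you argue directly.
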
 

\begin{proof}
 If $A_{E,S} = G(H_{E,S}/E)$ is not cyclic, then no place of $E$ can have full decomposition group in $G(H_{E,S}/E)$. In particular, as $H_{E,S}\subseteq H_{L,S}$, this contradicts the existence of $\mathfrak{q}$ and so proves claim (i). Since $\mathfrak{q}_E$ is unramified in $H_{E,S}$, claim (ii) follows directly from class field theory. Claim (iii) then follows from claim (ii) and the fact $\mathfrak{q}_E$ is invariant under the action of $G(E/K)$.
\end{proof}


\begin{remark} If $A_K$ is cyclic, then there are infinitely many sets $\Sigma$ of non-archimedean, non-$p$-adic, places of $K$ for which the (finite) extension $K_{\Sigma}/K$ satisfies the non-splitting hypothesis of Lemma \ref{(C1)}. To see this, write $\mathrm{Gov}(K)$ for the governing field of $K$ (cf. \cite[Def. 3.1]{GrasMunnier}). Then, by applying the Chebotarev Density Theorem to the finite extension $\mathrm{Gov}(K)H_K/K(\zeta_p)$, one can choose a non-archimedean, non $p$-adic, place $\mathfrak{p}$ of $K$ that is inert in $H_K$, splits completely in $K(\zeta_p)$ and is such that, for any, and therefore every, fixed place $\mathfrak{q}$ of $K(\zeta_p)$ above $\mathfrak{p}$, the Frobenius automorphism $\mathrm{Fr}_{\mathfrak{q}}$ of $\mathfrak{q}$ in $ V:= G(\mathrm{Gov}(K)/K(\zeta_p))$ is non-trivial (it is possible that $H_K\subseteq \mathrm{Gov}(K)$, but this fact has no impact on our construction of $\mathfrak{p}$). Now fix $s\in [r_K+1]$. Then, as the $\mathbb{F}_p$-space $V$ has dimension $r_K+1$, one can fix a subset $\{v_i\}_{i \in [s+1]}$ of $V$, with $v_{s+1} = \mathrm{Fr}_{\mathfrak{q}}$, that spans a subspace of dimension $s$ and is such that any proper subset is linearly independent. Let $\Sigma = \{ \mathfrak{p}_i \}_{i \in [s+1]}$ be a set formed by choosing $\mathfrak{p}_{s+1}=\mathfrak{p}$ and a non-$p$-adic place $\mathfrak{p}_i$ of $K$ for each $i \in [s]$ such that $\mathfrak{p}_i$ splits in $K(\zeta_p)$ and $v_i$ is equal to the Frobenius automorphism at a place of $K(\zeta_p)$ above $\mathfrak{p}_i$. Then $G_{K,\Sigma}$ has generator rank $2$ as a consequence of the Gras-Munnier Theorem, the cyclicity of $A_K$, and the cyclicity of the inertia subgroup of the Galois group at a non-$p$-adic place for pro-$p$ extensions of number fields. In addition, by construction, $\mathfrak{p}$ is inert in $H_K$ and ramified in the degree $p$ extension of $K$ (in the Gras-Munnier Theorem) that is ramified precisely at $\Sigma$. Hence, $\mathfrak{p}$ does not split in $K_{\Sigma}$ by the Burnside Basis Theorem.  
\end{remark}

\begin{remark}\label{Wingberg-exam}
Fix a number field $F$ and a finite set $\Sigma$ of places of $F$ containing all places that are either $p$-adic or archimedean. Then, following Wingberg \cite{Wingberg}, the group $G_{F,\Sigma}$ is said to be `of local type' if some place $\mathfrak{p}$ in $\Sigma$ has full decomposition group in $G_{F,\Sigma}$. In this case, since $\mathfrak{p}$ does not split in $F_{\Sigma}$, Lemma \ref{(C1)} implies that (C1) is satisfied by any cyclic $p$-extension $L/K$ with $F \subseteq K \subseteq L \subset F_{\Sigma}$. In addition, if $F$ is totally real, then \cite[Prop. 1.1]{Wingberg} implies $G_{F,\Sigma}$ is of local type if and only if $F$ is $p$-rational and $\Sigma$ is primitive (in the sense of \cite[\S IV.3]{Gras2}, \cite{Movahhedi}) and so a recent conjecture of Gras \cite{Gras} implies there should be many such $G_{F,\Sigma}$. More generally, \cite{Wingberg} gives a criterion in terms of the arithmetic of $F$ for the group $G_{F,\Sigma}$ to be of local type and explicit examples of such $F$ for which $G_{F,\Sigma}$ is `large' (such as a Demushkin group of rank $4$). 
\end{remark}


\section{Galois cohomology}\label{subsectionGmodulestructure} 

In this section, we fix data as in (\ref{data fix}) and establish (in Proposition \ref{generators}) the key consequence that Hypothesis \ref{key hyp} has for our theory. To do so, we fix a subgroup $J$ of $G$, set $E:=L^J$ and use the following notations.
\begin{itemize}
\item[-] $I_E$ is the pro-$p$ completion of the group of fractional $\mathcal{O}_E$-ideals.
\item[-] $P_E$ is the pro-$p$ completion of the group of principal $\mathcal{O}_E$-ideals.
\item[-] For a finite set $S$ of primes of $K$, $\langle S \rangle_E$ is the $\mathbb{Z}_p$-submodule of $I_E$ generated by the prime ideals of $E$ above $S$.
\item[-] We write $I_{E,S}$ and $P_{E,S}$ to denote $I_E/\langle S \rangle_E$ and $P_E/(P_E \cap \langle S \rangle_E)$ respectively.
\item[-] By abuse of notations, we use $I_{E,S}$ and $P_{E,S}$ to denote also their images in $I_{L,S}$ under the map $I_{E,S} \to I_{L,S}$ induced by the lifting map.
\item[-] For a fractional ideal $\rho$ of $\mathcal{O}_E$, we will also use $\rho$ to denote its image in $I_E$.
\end{itemize}
We regard all of the groups listed above as $\mathbb{Z}_p[G(E/K)]$-modules in the natural way. 

The following result gives an easy consequence of (C2) regarding these modules that will form the basis of our approach. (We note that all results in this section are vacuously true for the trivial subgroup $J$ and so we will only consider the case that $J$ is non-trivial in the proofs.)

\begin{lemma}\label{kl iso} If $L/K$ satisfies (C2), then there exists a canonical identification of $\mathbb{Z}_p[G/J]$-modules
\[ H^1(J, U_{L,S}) \cong (P_{L,S})^{J}/P_{E,S} \cong \ker\bigl( (I_L)^J/\langle S \rangle_E P_E \xrightarrow{\iota} A_{L,S} \cong I_L/\langle S \rangle_L P_L  \bigr),\] 
where $\iota$ is induced by the natural map $I_{L} \to A_{L,S} \cong I_L/\langle S \rangle_L P_L $. 
\end{lemma}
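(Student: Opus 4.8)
The plan is to compute $H^1(J, U_{L,S})$ by comparing the short exact sequence defining $U_{L,S}$ with that defining the $S$-class group, and exploiting (C2) to kill the contribution of the roots of unity. First I would recall the tautological exact sequence of $\mathbb{Z}_p[G]$-modules $0 \to \mathbb{Z}_p \otimes_{\mathbb Z}\mathcal{O}_{L,S}^\times \to \mathbb{Z}_p \otimes_{\mathbb Z} L^\times \to P_{L,S} \to 0$ obtained by sending a function to the principal $S$-ideal it generates (after pro-$p$ completion and modding out by $\langle S\rangle_L$), together with $0 \to \mu_L \otimes \mathbb{Z}_p \to \mathbb{Z}_p\otimes_{\mathbb Z}\mathcal{O}_{L,S}^\times \to U_{L,S} \to 0$. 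Hypothesis (C2) guarantees, via Remark \ref{herbrand rem}, that $\mu_L$ is a cohomologically trivial $G$-module, hence $H^i(J,\mu_L\otimes\mathbb{Z}_p) = 0$ for all $i\ge 1$ and all $J\le G$; so the inclusion $\mu_L\otimes\mathbb{Z}_p \hookrightarrow \mathbb{Z}_p\otimes\mathcal{O}_{L,S}^\times$ induces isomorphisms on Tate cohomology in all degrees, and in particular $H^1(J,U_{L,S}) \cong H^1(J, \mathbb{Z}_p\otimes\mathcal{O}_{L,S}^\times)$ and $H^0$ is likewise unaffected.

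Next I would take $J$-cohomology of the first sequence. Since $\mathbb{Z}_p\otimes_{\mathbb Z} L^\times$ is, by Hilbert 90 (applied to $L/E$ and then $p$-completed, using that $H^1(J,\mathbb{Z}_p\otimes L^\times)$ vanishes because $H^1(J,L^\times)=0$ and $\mathbb{Z}_p$ is flat), cohomologically trivial in degree $1$ — more precisely $H^1(J,\mathbb{Z}_p\otimes_{\mathbb Z}L^\times)=0$ — the long exact sequence yields
\[
0 \to (\mathbb{Z}_p\otimes\mathcal{O}_{L,S}^\times)^J \to (\mathbb{Z}_p\otimes L^\times)^J \to (P_{L,S})^J \to H^1(J,\mathbb{Z}_p\otimes\mathcal{O}_{L,S}^\times)\to 0.
\]
Now $(\mathbb{Z}_p\otimes L^\times)^J = \mathbb{Z}_p\otimes E^\times$ and the image of this in $(P_{L,S})^J$ is exactly $P_{E,S}$ (viewed inside $I_{L,S}$ via the lifting map, as in the notation set up before the lemma). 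Hence $H^1(J,\mathbb{Z}_p\otimes\mathcal{O}_{L,S}^\times) \cong (P_{L,S})^J/P_{E,S}$, which combined with the previous paragraph gives the first identification $H^1(J,U_{L,S}) \cong (P_{L,S})^J/P_{E,S}$, and the $\mathbb{Z}_p[G/J]$-equivariance is built into the construction.

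For the second identification, I would observe that $P_{L,S} = \langle S\rangle_L P_L / \langle S\rangle_L \subseteq I_{L,S} = I_L/\langle S\rangle_L$, so that $(P_{L,S})^J$ sits inside $(I_{L,S})^J$, and there is an evident map $(I_L)^J/\langle S\rangle_E P_E \to I_{L,S} \to A_{L,S}$; its kernel consists of classes of $J$-invariant ideals of $L$ that become principal-mod-$S$ in $L$, i.e. that lie in $\langle S\rangle_L P_L$. One then checks that modding $(I_L)^J$ by the subgroup $\langle S\rangle_E P_E$ identifies $\ker(\iota)$ precisely with the image of $(P_{L,S})^J$ modulo $P_{E,S}$: an element of $\ker(\iota)$ is represented by some $\rho\in (I_L)^J$ with $\rho \in \langle S\rangle_L P_L$, i.e. $\rho$ gives a class in $(P_{L,S})^J$, well-defined modulo the ideals coming from $\langle S\rangle_E$ and from $P_E$, which is the statement. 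The one point requiring a little care — and the main obstacle — is the verification that $(P_{L,S})^J$ is correctly identified: a $J$-invariant element of the quotient $P_{L,S}$ need not lift to a $J$-invariant element of $P_L$ or of $\langle S\rangle_L P_L$, so one must track the identification through $(I_L)^J$ rather than through $(P_L)^J$, which is exactly why the middle term of the displayed isomorphism is phrased using $(I_L)^J/\langle S\rangle_E P_E$ and not $(P_L)^J/\cdots$. Handling this cleanly amounts to a diagram chase comparing $0\to \langle S\rangle_L P_L \to I_L \to A_{L,S}\to 0$ with its sub-object $\langle S\rangle_E P_E \subseteq \langle S\rangle_L P_L$ after taking $J$-invariants, which I would carry out explicitly.
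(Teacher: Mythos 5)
Your proposal is correct and takes essentially the same route as the paper's proof: the exact sequence $0\to\mathcal{O}_{L,S}^\times\to L^\times\to P(L)/P_S(L)\to 0$, Hilbert 90, and the cohomological triviality of $\mu_L$ supplied by (C2) give the first identification, and the second follows from $(I_{L,S})^J\cap P_{L,S}=(P_{L,S})^J$ together with the identification of $(I_{L,S})^J$ with $(I_L)^J/\langle S\rangle_E$. The only differences are cosmetic: you pro-$p$-complete before taking cohomology (legitimate, by flatness of $\mathbb{Z}_p$ and the finite free resolutions computing $H^\ast(J,-)$) where the paper completes afterwards, and the deferred diagram chase at the end rests, exactly as the paper notes, on the vanishing of $H^1(J,\langle S\rangle_L)$ (a permutation module on places unramified in $L/E$, since $S\cap R_{L/K}=\emptyset$) and on $(\langle S\rangle_L)^J=\langle S\rangle_E$.
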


\begin{proof} There is a canonical exact sequence 
\[ 0 \to \mathcal{O}_{L,S}^\times \to L^\times \to P(L)/P_S(L) \to 0,\]
where for each intermediate field $E$ of $L/K$, $P(E)$ denotes the group of principal fractional $\mathcal{O}_E$-ideals and $P_S(E)$ is the subgroup of principal fractional ideals generated by $S$-units. By Galois cohomology and Hilbert's Theorem 90, we have an exact sequence  
\[\xymatrix{ 0 \ar[r] & \mathcal{O}_{E,S}^\times \ar[r] & E^\times \ar[r] & \big ( P(L)/P_S(L) \big )^J \ar[r] & H^1(J,\mathcal{O}_{L,S}^\times) \ar[r] &0,} \]
and hence an induced isomorphism 
\[ H^1(J, \mathcal{O}_{L,S}^{\times}) \cong \mathrm{coker} \bigl( P(E)/P_S(E) \to (P(L)/P_S(L))^J \bigr).\] 
Upon passing to pro-$p$ completions, this identifies $H^1(J,(\mathcal{O}_{L,S}^{\times})_p)$ with $\mathrm{coker}(P_{E,S} \to (P_{L,S})^J)$.


We next recall (from Remark \ref{herbrand rem}) that (C2) implies $\mu_L$ is a cohomologically-trivial $G$-module, and hence that the group $H^i(J,\mu_{L,p})$ vanishes for every $i$. From the tautological short exact 
sequence $0 \to \mu_{L,p} \to (\mathcal{O}_{L,S}^\times)_p \to U_{L,S}\to 0$, we can thus deduce that, if (C2) is satisfied, then the natural map $H^1(J,(\mathcal{O}_{L,S}^{\times})_p) \to H^1(J,U_{L,S})$ is bijective. The above argument therefore shows that $H^1(J, U_{L,S})$ is isomorphic to $(P_{L,S})^{J}/P_{E,S}$, as claimed. 

Finally, we note that $(I_{L,S})^J \cap P_{L,S} = (P_{L,S})^J$ and hence that $(P_{L,S})^J/P_{E,S}$ is the kernel of the natural map $(I_{L,S})^J/P_{E,S} \to I_{L,S}/P_{L,S}$. We have $(I_{L,S})^J/P_{E,S} \cong (I_L)^J/\langle S \rangle_E P_E$ because $(I_{L,S})^J$ identifies with $(I_L)^J/\langle S \rangle_E$ since $H^1(J, \langle S \rangle_L)$ vanishes and $S \cap R_{L/K} = \emptyset$. Therefore, the second claimed isomorphism follows.
\end{proof} 

Via this result, the group $H^1(J, U_{L,S})$ is parametrised, under Hypotheses \ref{key hyp}, in terms of the classes in $I_{L}/\langle S \rangle_E P_{E}$ of certain $J$-invariant ideals in $I_{L}$, and in the next result we describe this parametrisation explicitly. 

We assume henceforth that Hypothesis \ref{key hyp} is satisfied and use the following notation. 

\begin{itemize}
\item[-] For $\mathfrak{a}\in I_E$ and $\mathfrak{b}\in (I_{L})^J$, we write 
$[\mathfrak{a}]_E$ and $[ \mathfrak{b} ]'_E$ 
for their respective images in $A_{E,S}$ and $(I_{L})^J/\langle S \rangle_E P_{E}$. 
\item[-] We fix a prime $\mathfrak{q}_E$ of $E$ not above $R_{L/K}$ whose class generates $A_{E,S}$.
\item[-] The decomposition and inertia subgroups in $G$ of a place $\mathfrak{r}$ of $K$ are $G(\mathfrak{r})$ and $I(\mathfrak{r})$. 
\item[-] For $\mathfrak{p}\in R_{L/K}$ we fix a $\mathfrak{p}$-adic place 
$\mathfrak{p}_L$ of $L$. We then define $J$-invariant ideals by setting  
\[ \mathfrak{p}_{L/E} := {\prod}_{\sigma\in J/(J\cap G(\mathfrak{p}))}\sigma(\mathfrak{p}_L) .\]
\end{itemize}

In the sequel we also write the action of $\mathbb{Z}_p[G/J]$ on $H^{1}(J,U_{L,S})$ additively and, for  $\mathfrak{p}\in R_{L/K}$, we denote the projection map $\mathbb{Z}_p[G/J] \to \mathbb{Z}_p[G/JG(\mathfrak{p})]$ by $\pi_{J}^\mathfrak{p}$. 

\begin{proposition}\label{generators} If Hypothesis \ref{key hyp} is satisfied, then the following claims are valid. 
\begin{itemize}
\item[(i)] The $\mathbb{Z}_p[G/J]$-module $H^{1}(J,U_{L,S})$ is contained (via isomorphism of Lemma \ref{kl iso}) in the $\mathbb{Z}_p[G/J]$-module $(I_{L})^J/\langle S \rangle_E P_{E}$ which is generated by $\{ [ \mathfrak{q}_E ]'_E \} \cup \{ [ \mathfrak{p}_{L/E} ]'_E \}_{\mathfrak{p} \in R_{L/K}}$. 

\item[(ii)] Fix  $m\in \mathbb{Z}$ and $\{ x(\mathfrak{p})\}_{\mathfrak{p}\in R_{L/K}}\subset \mathbb{Z}_p[G/J]$. 
Then  $[\mathfrak{q}_{E}^{m}\prod_{\mathfrak{p}\in R_{L/K}} (\mathfrak{p}_{L/E})^{x(\mathfrak{p})}]'_E = 0$ 
only if for all $\mathfrak{p}\in R_{L/K}$ the element $\pi_J^\mathfrak{p}(x(\mathfrak{p}))$ is divisible by $|J\cap I(\mathfrak{p})|$.
%

%
\item[(iii)] Fix subgroups $H$ and $H'$ of $G$ and let $\Omega$ be a subset of $R_{L/K}$ with the property that, for all  $\mathfrak{p}$ in $\Omega$, one has $I(\mathfrak{p}) = H$ and $G(\mathfrak{p}) = H'$. Then, if $|\Omega| \ge 3$, there exists a $\mathbb{Z}_p[G/J]$-module direct summand of $H^{1}(J, U_{L,S})$ that is isomorphic to the direct sum of $|\Omega|-2$ copies of $(\mathbb{Z}/|J\cap H|)[G/JH']$.
\end{itemize}
\end{proposition}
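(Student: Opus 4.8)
The plan is to use parts (i) and (ii) to pin down a large free-type quotient of $H^1(J,U_{L,S})$ coming from the ideals $\mathfrak p_{L/E}$ with $\mathfrak p\in\Omega$, and then invoke projectivity of $\mathbb Z_p[G/JH']$-modules to split off the corresponding summand. First I would fix, for each $\mathfrak p\in\Omega$, the ideal $\mathfrak p_{L/E}$; since $I(\mathfrak p)=H$ and $G(\mathfrak p)=H'$ for all these $\mathfrak p$, the element $\pi^{\mathfrak p}_J$ is the common projection $\mathbb Z_p[G/J]\to\mathbb Z_p[G/JH']$, and the relevant divisibility in (ii) is by the common integer $|J\cap H|$. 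The point of assuming $|\Omega|\ge 3$ is that one can use one element of $\Omega$ (say $\mathfrak p_0$) to absorb the $\mathfrak q_E$-contribution and a second (say $\mathfrak p_1$) to absorb, via (ii)'s necessary condition being close to sufficient, the remaining slack, leaving the $|\Omega|-2$ ideals $\{\mathfrak p_{L/E}\}_{\mathfrak p\in\Omega\setminus\{\mathfrak p_0,\mathfrak p_1\}}$ to generate a submodule on which the relations are controlled purely by (ii).

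Concretely, I would consider the $\mathbb Z_p[G/J]$-submodule $N$ of $(I_L)^J/\langle S\rangle_E P_E$ generated by $\{[\mathfrak p_{L/E}]'_E\}_{\mathfrak p\in\Omega}$ together with $[\mathfrak q_E]'_E$, and examine the composite
\[
N\;\xrightarrow{\ \phi\ }\;{\bigoplus}_{\mathfrak p\in\Omega\setminus\{\mathfrak p_0,\mathfrak p_1\}}\mathbb Z_p[G/JH']\;\xrightarrow{\ \bmod |J\cap H|\ }\;{\bigoplus}_{\mathfrak p\in\Omega\setminus\{\mathfrak p_0,\mathfrak p_1\}}(\mathbb Z/|J\cap H|)[G/JH'],
\]
where $\phi$ records the ``$\mathfrak p$-components'' of an ideal for $\mathfrak p\in\Omega\setminus\{\mathfrak p_0,\mathfrak p_1\}$. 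Part (ii) shows this composite kills every relation among the generators of $N$ that involves only $\mathfrak q_E$ and the $\mathfrak p_0,\mathfrak p_1$ components freely, so it descends to a well-defined surjection $\psi$ from $H^1(J,U_{L,S})$ (or at least from $N$, which contains $H^1(J,U_{L,S})$ by (i)) onto the target module $M:=\bigoplus_{|\Omega|-2}(\mathbb Z/|J\cap H|)[G/JH']$. The key check is that $\psi$ is defined on $H^1(J,U_{L,S})$ itself, i.e. that the $J$-invariant ideals representing classes in $H^1(J,U_{L,S})$ do land in $N$ with the expected components; this is exactly where Lemma \ref{kl iso} and the explicit generating set of part (i) are used.

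Once such a surjection $\psi\colon H^1(J,U_{L,S})\twoheadrightarrow M$ is in hand, I would build a section. Since $M$ is a sum of copies of $\mathbb Z_p[G/JH']$ reduced mod the scalar $|J\cap H|$, and since each generator $[\mathfrak p_{L/E}]'_E$ for $\mathfrak p\in\Omega\setminus\{\mathfrak p_0,\mathfrak p_1\}$ maps to the canonical generator of one $(\mathbb Z/|J\cap H|)[G/JH']$ summand, the submodule $M'$ of $H^1(J,U_{L,S})$ generated by these $|\Omega|-2$ elements surjects onto $M$. It remains to see that $M'$ is itself isomorphic to $M$ (so the surjection $M'\to M$ is an isomorphism) and that $M'$ is a direct summand. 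For the first point one uses (ii) once more: any relation $\sum x(\mathfrak p)[\mathfrak p_{L/E}]'_E=0$ with $\mathfrak q_E$ and the two distinguished ideals absent forces each $\pi^{\mathfrak p}_J(x(\mathfrak p))=x(\mathfrak p)$ (the projection is the identity here since $JG(\mathfrak p)=JH'$ already appears in the quotient $G/J\to G/JH'$... more precisely one works modulo the annihilator) to be divisible by $|J\cap H|$, so $M'$ has no more relations than $M$, giving $M'\cong M$. For the splitting, $M$ is a quotient of a free $\mathbb Z_p[G/JH']$-module by the ideal $(|J\cap H|)$, hence a projective $(\mathbb Z/|J\cap H|)[G/JH']$-module; the surjection $\psi$ therefore splits over that ring, and the image of the splitting is the desired direct summand.

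The main obstacle I anticipate is the middle step: verifying that $\psi$ is well-defined on $H^1(J,U_{L,S})$ rather than merely on the ambient module $(I_L)^J/\langle S\rangle_E P_E$, and that the necessary condition in (ii) is in fact sharp enough here (i.e. that there are no hidden relations coming from the $\iota$ in Lemma \ref{kl iso} or from principal-ideal relations among the $\mathfrak p_{L/E}$) to conclude $M'\cong M$ and not merely a quotient of $M$. Handling this cleanly will require carefully separating the roles of $\mathfrak p_0$ and $\mathfrak p_1$ — one to swallow the ``$\mathfrak q_E$ generates $A_{E,S}$'' relation and one to provide enough freedom that the remaining $|\Omega|-2$ classes are genuinely independent modulo $|J\cap H|$ — which is the structural reason the bound is $|\Omega|-2$ and not $|\Omega|$ or $|\Omega|-1$.
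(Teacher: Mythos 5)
Your plan gets the shape of the statement right (the roles of two distinguished places, the target module $(\mathbb{Z}/|J\cap H|)[G/JH']^{\,|\Omega|-2}$), but it has a genuine gap at the decisive step, and the two shortcuts you propose for closing it both fail. First, the classes $[\mathfrak{p}_{L/E}]'_E$ for $\mathfrak{p}\in\Omega\setminus\{\mathfrak{p}_0,\mathfrak{p}_1\}$ do not in general lie in $H^1(J,U_{L,S})$ at all: by Lemma \ref{kl iso} that group is the kernel of $\iota$, i.e.\ consists of classes of $J$-invariant ideals that are principal modulo $S$ in $L$, and a single $\mathfrak{p}_{L/E}$ need not be. So the submodule $M'$ you generate by these elements is not a submodule of $H^1(J,U_{L,S})$, and even after fixing that, its generators need not be annihilated by $|J\cap H|$: one has $|J\cap H|\,[\mathfrak{p}_{L/E}]'_E=[\mathfrak{p}_E]'_E$, the class of an ideal of $E$, which lands in the $A_{E,S}$-part of the extension $0\to A_{E,S}\to (I_{L,S})^J/P_{E,S}\to (I_{L,S})^J/I_{E,S}\to 0$ and is generally nonzero. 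Part (ii) only gives the \emph{lower} bound on annihilators (no unexpected relations); it says nothing about the needed \emph{upper} bound. Second, the fallback via projectivity does not work: $M$ is projective over $(\mathbb{Z}/|J\cap H|)[G/JH']$, but $\psi$ is a surjection of $\mathbb{Z}_p[G/J]$-modules from a module that is not defined over that quotient ring, and such surjections need not split (already $\mathbb{Z}/p^2\twoheadrightarrow\mathbb{Z}/p$ with trivial group action is a counterexample). The existence of a section is exactly the hard content.

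What fills the gap in the paper is a double use of the cyclicity hypothesis (C1), which your plan never invokes. Cyclicity of $A_{L,S}$ lets one choose $\mathfrak{p}_1$ of maximal order and integers $m(j)$ so that $\mathfrak{p}_{1,j,E}:=\mathfrak{p}_{j,L/E}\,\mathfrak{p}_{1,L/E}^{-m(j)}$ is principal modulo $S$ in $L$, hence genuinely represents a class in $H^1(J,U_{L,S})$. Then $\mathfrak{p}_{1,j,E}^{|J\cap H|}=\mathfrak{p}_{1,j,E}^{\ast}\mathcal{O}_L$ for an ideal $\mathfrak{p}_{1,j,E}^{\ast}$ of $E$, and cyclicity of $A_{E,S}$ (with $\mathfrak{p}_2$ chosen so that $[\mathfrak{p}_{1,2,E}^{\ast}]_E$ has maximal order) provides exponents $n(k)$ such that $\mathfrak{b}_{k,E}:=\mathfrak{p}_{1,k,E}(\mathfrak{p}_{1,2,E})^{-n(k)}$ satisfies $|J\cap H|\,[\mathfrak{b}_{k,E}]'_E=0$ exactly. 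Combined with part (ii) (which rules out smaller annihilators and cross-relations via the component map in (\ref{decomp})), these $|\Omega|-2$ elements generate a submodule isomorphic to $M$, and the explicit change of generators in (\ref{rep ideal}) exhibits the complement, giving the direct sum decomposition without any appeal to projectivity. This two-step correction is the structural reason the count is $|\Omega|-2$, which you correctly anticipated but did not substantiate.
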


\begin{proof} By definition, $(P_{L,S})^J/P_{E,S}$ is a subset of $(I_{L,S})^J/P_{E,S}$. We can analyze $(I_{L,S})^J/P_{E,S}$ by the exact sequence
\begin{equation}\label{phi seq} 0 \to A_{E,S} \to (I_{L,S})^J/P_{E,S} \xrightarrow{\psi} (I_{L,S})^J/I_{E,S}\to 0\end{equation}
and the isomorphism of $\mathbb{Z}_p[G]$-modules
\begin{equation}\label{decomp}(I_{L,S})^J/I_{E,S} \cong (I_L)^J/I_{E} \cong  {\bigoplus}_{\mathfrak{p} \in R_{L/K}}(\mathbb{Z}/|J\cap I(\mathfrak{p})|)[G/JG(\mathfrak{p})] 
\end{equation}
in which each summand is generated by the class of $\mathfrak{p}_{L/E}$. The first isomorphism follows from the condition $S \cap R_{L/K} = \emptyset$. Claim (i) follows because $A_{E,S}$ is generated by $[\mathfrak{q}_E]_E$.


To prove claim (ii) we note that for any $\mathfrak{p} \in R_{L/K}$, any prime $\mathfrak{P}$ of $L$ above $\mathfrak{p}$ and any natural number $n$, one has 
\[ \bigl ( {\prod}_{\sigma\in J/(J\cap G(\mathfrak{p}))} \sigma(\mathfrak{P}) \bigr)^n \in I_E \Longleftrightarrow n \,\,\text{ is divisible by }\,\, |J \cap I(\mathfrak{p})|.\] 
In particular, since the action of $G/J$ on $\mathfrak{p}_{L/E}$ factors through $\pi_J^\mathfrak{p}$, we have $\mathfrak{p}^{x(\mathfrak{p})}_{L/E} \in I_{E}$ if and only if $\pi_{J}^{\mathfrak{p}}(x(\mathfrak{p}))$ is divisible by $|J \cap I(\mathfrak{p})|$.

If $[\mathfrak{q}_{E}^{m}\prod_{\mathfrak{p}\in R_{L/K}} (\mathfrak{p}_{L/E})^{x(\mathfrak{p})}]'_E = 0$, then the ideal $\mathfrak{q}_{E}^{m}{\prod}_{\mathfrak{p}\in R_{L/K}}(\mathfrak{p}_{L/E})^{x(\mathfrak{p})}$ represents an element of $\ker(\psi)$. Claim (ii) is thus a consequence of (\ref{decomp}). 

To prove claim (iii), we set $t := |\Omega|$ and label the places in $\Omega$ as $\{\mathfrak{p}_i\}_{i \in [t]}$. We assume, after relabelling if necessary, that $[\mathfrak{p}_1]_L$ has maximum order (in $A_{L,S}$) amongst the elements $\{[\mathfrak{p}_i]_L\}_{i \in [t]}$. Then, for $j \in [t]\setminus \{1\}$, we fix $m(j) = m(L/K,j) \in \mathbb{Z}$ with $[\mathfrak{p}_{j}\cdot\mathfrak{p}_1^{-m(j)}]_L=0 \in A_{L,S}$ so that 
\[ \mathfrak{p}_{1,j,E} := \mathfrak{p}_{j,L/E}\cdot\mathfrak{p}_{1,L/E}^{-m(j)} \in \langle S \rangle_L P_L  \cap (I_L)^J \]
defines an element of $H^1(J,U_{L,S})$. Such a linear relation exists because $A_{L,S}$ is cyclic. Next we note that, for $j\in [t]\setminus\{1\}$, there exists a unique ideal $\mathfrak{p}_{1,j,E}^\ast$ of $E$ with $\mathfrak{p}_{1,j,E}^{|J\cap H|} = \mathfrak{p}_{1,j,E}^\ast\mathcal{O}_L$. After relabelling if necessary, we assume the order of $[\mathfrak{p}_{1,2,E}^\ast]_E$ (in $A_{E,S}$) is maximal amongst the orders of $\{[\mathfrak{p}^\ast_{1,j,E}]_E\}_{j \in [t]\setminus \{1\}}$ and then, for $k\in [t]\setminus \{1,2\}$, we fix 
\begin{equation}\label{n exponent} n(k) = n(L/K,E,k) \in \mathbb{Z}\end{equation}
with $[\mathfrak{p}^\ast_{1,k,E}(\mathfrak{p}_{1,2,E}^\ast)^{-n(k)}]_E=0 \in A_{E,S}$ by using the cyclicity of $A_{E,S}$. Setting for each $3 \leq k \leq t$
\begin{align*} \mathfrak{b}_{k,E} = \mathfrak{b}_{k,L/K,E}:=&\, \mathfrak{p}_{1,k,E} \cdot (\mathfrak{p}_{1,2,E})^{-n(k)}\\
 =&\, (\mathfrak{p}_{1,L/E})^{m(2)n(k)-m(k)}\cdot(\mathfrak{p}_{2,L/E})^{-n(k)}\cdot\mathfrak{p}_{k,L/E}\\
 \in&\, (I_L)^{JH'} \cap \langle S \rangle_L P_L \subseteq (I_L)^{J}\cap \langle S \rangle_L P_L\end{align*}
one therefore has 
\begin{equation}\label{ann} |J\cap H|[ \mathfrak{b}_{k,E} ]'_E = [ \mathfrak{p}^\ast_{1,k,E}(\mathfrak{p}_{1,2,E}^\ast)^{-n(k)}\mathcal{O}_L]'_E = 0.\end{equation}
In particular, since the $\mathfrak{p}_{k,L/E}$-component of the decomposition (\ref{decomp}) is generated over $\mathbb{Z}
_p[G/JH']$ by the image of $\mathfrak{b}_{k,E}$ under the map $\psi$ in (\ref{phi seq}), the $\mathbb{Z}_p[G/J]$-submodule  
 of $H^1(J,U_{L,S})$ generated by $[\mathfrak{b}_{k,E}]'_E$  is isomorphic to $(\mathbb{Z}/|J\cap H|)[G/JH']$. 
 
Next we note that, setting $\Omega' := R_{L/K} \setminus \{\mathfrak{p}_i\}_{i \in [t]\setminus \{1,2\}}$,  claim (i) implies every element of $H^1(J,U_{L,S})$ is represented by an ideal in $\langle S \rangle_L P_L \cap (I_L)^J$ of the form 
\begin{multline}\label{rep ideal} \bigl(\mathfrak{q}_E^m {\prod}_{\mathfrak{p}\in \Omega'}(\mathfrak{p}_{L/E})^{x(\mathfrak{p})}\bigr)\times \bigl({\prod}_{k\in [t]\setminus \{1,2\}}(\mathfrak{b}_{k,E})^{x(k)}\bigr)\\ = \mathfrak{q}_E^m {\prod}_{\mathfrak{p}\in \Omega'}(\mathfrak{p}_{L/E})^{x(\mathfrak{p})'}{\prod}_{k\in [t]\setminus \{1,2\}} (\mathfrak{p}_{k,L/E})^{x(k)},\end{multline}
for elements $x(k)$ of $\mathbb{Z}_p[G/J]$ and suitable integers $m$ and elements $x(\mathfrak{p})$ of $\mathbb{Z}_p[G/J]$. Here, to ensure the equality, we have set  
\[ x(\mathfrak{p})' := \begin{cases} x(\mathfrak{p}) + {\sum}_{k\in [t]\setminus \{1,2\}}(m(2)n(k)-m(k))x(k), &\text{if $\mathfrak{p} = \mathfrak{p}_1$};\\
 x(\mathfrak{p}) - {\sum}_{k\in [t]\setminus \{1,2\}}n(k)x(k), &\text{if $\mathfrak{p} = \mathfrak{p}_2$};\\
 x(\mathfrak{p}), &\text{if $\mathfrak{p} \in \Omega' \setminus\{ \mathfrak{p}_1, \mathfrak{p}_2\}.$}\end{cases}\]

Now, since the ideal (\ref{rep ideal}) represents the trivial class in $A_{L,S}$, the first factor in the product on the left hand side must belong to the group  
\[ I^\ast := \langle S \rangle_L P_L\cap \{\mathfrak{q}_E^m {\prod}_{\mathfrak{p}\in \Omega'}(\mathfrak{p}_{L/E})^{x(\mathfrak{p})}: m \in \mathbb{Z},\,\,\,x(\mathfrak{p})\in \mathbb{Z}_p[G/J]\} \subseteq (I_L)^J.\]
We now write $X$ and $Y$ for the $\mathbb{Z}_p[G/J]$-submodules of $H^1(J,U_{L,S})$ that are respectively generated by the classes of ideals in $I^\ast$ and $\{\mathfrak{b}_{k,E}\}_{k \in [t]\setminus \{1,2 \}}$. Then, to prove the claim, it is enough to show that $H^1(J,U_{L,S})$ decomposes as a direct sum of $\mathbb{Z}_p[G/J]$-modules $X\oplus Y$ and that $Y$ is isomorphic to a direct sum of $t-2$ copies of $(\mathbb{Z}/|J\cap H|)[G/JH']$. 

To show this, it is in turn enough to assume the ideal (\ref{rep ideal}) has trivial class in $H^1(J,U_{L,S})$, and thereby deduce every element $\pi_{J}^\mathfrak{p}(x(k))$ is divisible by $|J\cap H|$ and the ideal $\bigl(\mathfrak{q}_E^m \prod_{\mathfrak{p}\in \Omega'}\mathfrak{p}_{L/E}^{x(\mathfrak{p})} \bigr)$ has trivial class in $H^1(J,U_{L,S})$. The first condition follows directly upon applying claim (ii) to the right hand side of (\ref{rep ideal}). Since this combines with (\ref{ann}) to imply $[ \prod_{k \in [t]\setminus\{1,2\}}(\mathfrak{b}_{k,E})^{x(k)}]'_E = 0$, the vanishing of $[ \mathfrak{q}_E^m \prod_{\mathfrak{p}\in \Omega'}(\mathfrak{p}_{L/E})^{x(\mathfrak{p})}]'_E$ then follows from the left hand expression in (\ref{rep ideal}).  
\end{proof} 

\section{Module structures}

In this section, we fix a natural number $n$ and a cyclic group $\Gamma$ of order $p^n$, with generator $\sigma$. For $i \in [n]^\ast$, we write $\Gamma_i$ for the subgroup of $\Gamma$ generated by $\sigma^{p^{n-i}}$ (so that $|\Gamma_i| = p^i$). 

We write ${\rm Lat}_n$ for the category of $\mathbb{Z}_p[\Gamma]$-lattices and fix a set of representatives $\mathcal{I}_n$ of the isomorphism classes of indecomposable $\mathbb{Z}_p[\Gamma]$-lattices that contains $\mathbb{Z}_p[\Gamma/\Gamma_i]$ for every $i \in [n]^\ast$.

\subsection{Yakovlev diagrams}\label{sec-Yak}

We write $\mathfrak{M}_n$ for the category of diagrams 
\begin{equation*}\label{yak diags} (A_\bullet,\alpha_\bullet,\beta_\bullet): \,\, A_1 \stackrel[\beta_1]{\alpha_1}{\leftrightarrows} A_2 \stackrel[\beta_2]{\alpha_2}{\leftrightarrows}  \cdots \stackrel[\beta_{n-1}]{\alpha_{n-1}}{\leftrightarrows} A_n\end{equation*}
in which each $A_i$ is a finite $(\mathbb{Z}/p^i)[\Gamma/\Gamma_i]$-module, and each $\alpha_i$ and $\beta_i$ is a morphism of $\mathbb{Z}_p[\Gamma]$-modules such that $\beta_i\circ \alpha_i$ and $\alpha_i\circ \beta_i$ are respectively induced by multiplication by $p$ and by the action of ${\sum}_{\gamma\in \Gamma_{i+1}/\Gamma_{i}}\gamma$. A morphism $(A_\bullet,\alpha_\bullet,\beta_\bullet) \to (A'_\bullet,\alpha'_\bullet,\beta'_\bullet)$ in $\mathfrak{M}_n$ is a collection of maps $\{\gamma_i:A_i\to A_i'\}_{i\in [n]}$ of $\mathbb{Z}_p[\Gamma]$-modules that commute with the respective maps 
 $\alpha_\bullet, \beta_\bullet, \alpha_\bullet',\beta_\bullet'$ (in particular, such a morphism is an isomorphism if and only if each map $\gamma_i$ is bijective). 

As a concrete example, each $M$ in ${\rm Lat}_n$ gives an object 
\[\Delta(M) = (A_\bullet,\alpha_\bullet,\beta_\bullet)\]
of $\mathfrak{M}_n$ in which each $A_i$ is $H^1(\Gamma_i,M)$ and each $\alpha_i$ and $\beta_i$ the natural  
restriction and corestriction maps. The importance of such examples is explained by the following result.  

\begin{proposition}[{Yakovlev \cite{Yakovlev1}}]\label{yak prop} The assignment $M \mapsto \Delta(M)$ induces a covariant essentially surjective functor $\Delta: {\rm Lat}_n \to \mathfrak{M}_n$. In addition, if  $\Delta(M)$ and $\Delta(N)$ are isomorphic, then there are non-negative integers $\{m_i\}_{i \in [n]^\ast}$ and $\{m_i'\}_{i \in [n]^\ast}$ and an isomorphism in ${\rm Lat}_n$ of the form   
\[ M \oplus {\bigoplus}_{i \in [n]^\ast} \mathbb{Z}_p[\Gamma/\Gamma_i]^{m_i} \cong N \oplus {\bigoplus}_{i \in [n]^\ast} \mathbb{Z}_p[\Gamma/\Gamma_i]^{m'_i}.\] 
\end{proposition}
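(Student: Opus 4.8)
The plan is to reproduce Yakovlev's argument from \cite{Yakovlev1}, and I will describe its skeleton while flagging where the real work lies. Throughout, $\Gamma$ is cyclic of order $p^n$ with the subgroups $\Gamma_i$ fixed above, so that $\Gamma/\Gamma_0 = \Gamma$; I will use freely that $\mathbb{Z}_p$ is a complete local ring, so that the Krull--Schmidt theorem holds for $\mathbb{Z}_p[\Gamma]$-lattices (and, since the objects of $\mathfrak{M}_n$ have finite endomorphism rings, also in $\mathfrak{M}_n$), and that $\Delta$ is visibly additive.

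The well-definedness and functoriality in the first sentence are formal. For $M$ in ${\rm Lat}_n$ and $i\in[n]$, the group $A_i := H^1(\Gamma_i,M)$ is finite and annihilated by $|\Gamma_i| = p^i$, hence a $(\mathbb{Z}/p^i)$-module, and it carries the natural residual action of $\Gamma/\Gamma_i$; the maps $\alpha_i,\beta_i$ are the restriction and corestriction maps (read through the periodicity isomorphism $H^1(\Gamma_i,M)\cong\hat{H}^{-1}(\Gamma_i,M)$ where convenient), and the required identities $\beta_i\circ\alpha_i = p$ and $\alpha_i\circ\beta_i = \sum_{\gamma\in\Gamma_{i+1}/\Gamma_i}\gamma$ are the standard transfer relations $\mathrm{cor}\circ\mathrm{res} = [\Gamma_{i+1}:\Gamma_i]$ and $\mathrm{res}\circ\mathrm{cor} = N_{\Gamma_{i+1}/\Gamma_i}$. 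All of this is natural in $M$, so $\Delta$ is a covariant functor ${\rm Lat}_n\to\mathfrak{M}_n$.

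Both the essential surjectivity in the first assertion and the rigidity in the second I would prove by induction on $n$, using the conductor (Milnor) square
\[ \mathbb{Z}_p[\Gamma]\;\cong\;\mathbb{Z}_p[\zeta_{p^n}]\times_{\mathbb{F}_p[\Gamma/\Gamma_1]}\mathbb{Z}_p[\Gamma/\Gamma_1], \]
in which $\mathbb{Z}_p[\zeta_{p^n}]$ is a discrete valuation ring. Through this square, a $\mathbb{Z}_p[\Gamma]$-lattice $M$ is equivalent to a triple: a free $\mathbb{Z}_p[\zeta_{p^n}]$-module $M^{\mathrm{top}}$, a $\mathbb{Z}_p[\Gamma/\Gamma_1]$-lattice $M^{\mathrm{bot}}$, and an isomorphism gluing their reductions over the Artinian ring $\mathbb{F}_p[\Gamma/\Gamma_1]$. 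The base case $n=1$ is the classical Diederichsen--Reiner classification: every lattice over $\mathbb{Z}_p[\Gamma]$ with $|\Gamma|=p$ is isomorphic to $\mathbb{Z}_p^a\oplus\mathbb{Z}_p[\Gamma]^b\oplus\mathbb{Z}_p[\zeta_p]^c$; here $\mathfrak{M}_1$ is the category of finite $\mathbb{F}_p$-vector spaces, $\Delta(M)=H^1(\Gamma,M)$ has $\mathbb{F}_p$-dimension exactly $c$, and since $\mathbb{Z}_p = \mathbb{Z}_p[\Gamma/\Gamma_1]$ and $\mathbb{Z}_p[\Gamma] = \mathbb{Z}_p[\Gamma/\Gamma_0]$ are both permitted among the stabilising summands, the case $n=1$ follows at once. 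For the inductive step I would show that $(M^{\mathrm{top}},M^{\mathrm{bot}})$, hence $M$, is controlled by $\Delta(M)$: the free module $M^{\mathrm{top}}$ is determined by its rank, which is read off from the $\Phi_{p^n}$-component of $\mathbb{Q}_p\otimes M$; the Yakovlev diagram $\Delta_{n-1}(M^{\mathrm{bot}})\in\mathfrak{M}_{n-1}$ for $\Gamma/\Gamma_1$ can be recovered from $\Delta(M)$ (using inflation--restriction and periodicity to relate $H^*(\Gamma_{j+1}/\Gamma_1,M^{\mathrm{bot}})$ to $A_j,A_{j+1}$ and the maps between them), so the inductive hypothesis pins down $M^{\mathrm{bot}}$ up to summands $\mathbb{Z}_p[(\Gamma/\Gamma_1)/(\Gamma_i/\Gamma_1)]=\mathbb{Z}_p[\Gamma/\Gamma_i]$; one then reassembles $M$ from the two pieces along compatible gluings, absorbing the discrepancy into standard summands $\mathbb{Z}_p[\Gamma/\Gamma_i]$. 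Essential surjectivity is obtained by running the same induction constructively: realize the truncated diagram over $\Gamma/\Gamma_1$, then choose a gluing along $\mathbb{F}_p[\Gamma/\Gamma_1]$ that produces the remaining datum $A_n$ together with $\alpha_{n-1},\beta_{n-1}$.

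The genuine difficulty is this reassembly step, where Yakovlev's technical work is concentrated. The equivalence furnished by the conductor square does not intertwine $\Delta$ transparently: $H^*(\Gamma_i,M)$ receives contributions from both $M^{\mathrm{top}}$ and $M^{\mathrm{bot}}$ through the connecting maps of the Mayer--Vietoris sequence of the square, so the relation between $\Delta(M)$ and the pair $\bigl(\Delta_{n-1}(M^{\mathrm{bot}}),\text{gluing}\bigr)$ must be worked out carefully and — more seriously — one must show that an isomorphism of Yakovlev diagrams can always be promoted to an isomorphism of the associated gluing data after stabilising by the lattices $\mathbb{Z}_p[\Gamma/\Gamma_i]$. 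Concretely, the obstruction is the failure of the map from automorphisms of the pair $(M^{\mathrm{top}},M^{\mathrm{bot}})$ to the diagram-compatible automorphisms of the common reduction over $\mathbb{F}_p[\Gamma/\Gamma_1]$ to be surjective, and one has to verify that this obstruction is killed by adjoining standard summands. Once that analysis is in place, the remaining steps are bookkeeping with the inductive hypothesis, Krull--Schmidt, and the additivity of $\Delta$.
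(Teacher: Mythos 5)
The paper offers no proof of this proposition: it is quoted as a black-box theorem of Yakovlev, so there is no internal argument to compare yours with. Judged on its own terms, your write-up is an outline rather than a proof, and the outline stops exactly where the theorem begins. The first paragraph (well-definedness of $\Delta$, the relations $\mathrm{cor}\circ\mathrm{res}=p$ and $\mathrm{res}\circ\mathrm{cor}=N_{\Gamma_{i+1}/\Gamma_i}$, functoriality) is fine, and the base case $n=1$ via the Diederichsen--Reiner classification is correct, since $\dim_{\mathbb{F}_p}H^1(\Gamma,M)$ does count the $\mathbb{Z}_p[\zeta_p]$-summands and the other two indecomposables are among the permitted stabilising modules. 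But the entire content of the second assertion is that an isomorphism $\Delta(M)\cong\Delta(N)$ can be promoted to an isomorphism of lattices after adding summands $\mathbb{Z}_p[\Gamma/\Gamma_i]$, and in your inductive step this is precisely the point you label ``the genuine difficulty'' and leave unverified: you assert, without argument, that the obstruction coming from the failure of automorphisms of $(M^{\mathrm{top}},M^{\mathrm{bot}})$ to surject onto the diagram-compatible automorphisms of the common reduction ``is killed by adjoining standard summands''. That is not a reduction of the problem; it is a restatement of it in the language of the conductor square.

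There are also unaddressed technical issues in the reduction itself. A general lattice over the fibre product $\mathbb{Z}_p[\zeta_{p^n}]\times_{\mathbb{F}_p[\Gamma/\Gamma_1]}\mathbb{Z}_p[\Gamma/\Gamma_1]$ is only a subdirect sum of its two projections, not in general the full pullback (Milnor patching applies to projectives), so the dictionary ``lattice $=$ (free top, bottom lattice, gluing)'' already needs the theory of separated representations to be made precise. Likewise, the recovery of $\Delta_{n-1}(M^{\mathrm{bot}})$ from $\Delta(M)$ via Mayer--Vietoris and inflation--restriction is asserted but not carried out, even though each $H^1(\Gamma_i,M)$ mixes contributions from both factors. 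None of this shows the strategy is wrong --- it is broadly the shape of arguments in the literature --- but as written the proposal establishes only the first sentence of the proposition and the case $n=1$; for the purposes of this paper the honest course is to do what the authors do and cite Yakovlev's theorem.
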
 

\begin{remark}\label{krull cons} When combined with the Krull-Schmidt Theorem (for the category ${\rm Lat}_n$), the final assertion of this result implies that 
if $\Delta(M)$ and $\Delta(N)$ are isomorphic, then any lattice in $\mathcal{I}_n$ that occurs (with a given multiplicity) as a direct summand of $M$ is either $\mathbb{Z}_p[\Gamma/\Gamma_i]$ for some $i$ or occurs (with the same multiplicity) as a direct summand of $N$. In particular, the isomorphism class of $\Delta(U_{L,S})$ in $\mathfrak{M}_n$ determines, uniquely up to isomorphism, a module $U_{L,S}^\dagger$ in ${\rm Lat}_n$ that has no direct summand isomorphic to $\mathbb{Z}_p[\Gamma/\Gamma_i]$ for any $i\in [n]^\ast$ and is such that for some (uniquely determined) set $\{t_i\}_{i \in [n]^\ast}$ of non-negative integers, there exists an isomorphism in ${\rm Lat}_n$ of the form 
\begin{equation}\label{ks iso} U_{L,S} \cong U_{L,S}^\dagger \oplus {\bigoplus}_{i \in [n]^\ast}\mathbb{Z}_p[\Gamma/\Gamma_i]^{t_i}.\end{equation}
\end{remark}

The next result presents an explicit example that will be useful in the next section. 

\begin{lemma}\label{explicit yak diagrams} Fix $a\in [n]$ and a non-negative integer $b$ with $a+b\le n$ and set $c:= n-(a+b)$. 
\begin{itemize}
\item[(i)] If $b=0$, then $M_{a,0} := \mathbb{Z}_p[\Gamma](\sigma^{p^{c}}-1)$ is an indecomposable $\mathbb{Z}_p[\Gamma]$-lattice.
\item[(ii)] If $b \not=0$, then  $M_{a,b} := \mathbb{Z}_p[\Gamma](p^{a}, \sigma^{p^{c}}-1)$ is an indecomposable $\mathbb{Z}_p[\Gamma]$-lattice. 
\item[(iii)] For all $a$ and $b$, the morphisms 
\[ H^1(\Gamma_i,M_{a,b}) \xrightarrow{{\rm res}} H^1(\Gamma_{i-1},M_{a,b}) \xrightarrow{{\rm cor}} H^1(\Gamma_i,M_{a,b})\]
are equivalent to 
\[\begin{cases} (\mathbb{Z}/p^{i})[\Gamma/\Gamma_{a+b}] \xrightarrow{} (\mathbb{Z}/p^{i-1})[\Gamma/\Gamma_{a+b}] \xrightarrow{\times p} (\mathbb{Z}/p^{i})[\Gamma/\Gamma_{a+b}], &\text{if $1< i\le a$,}\\
(\mathbb{Z}/p^a)[\Gamma/\Gamma_{a+b}] \xrightarrow{{\rm id}} (\mathbb{Z}/p^a)[\Gamma/\Gamma_{a+b}] \xrightarrow{\times p} (\mathbb{Z}/p^a)[\Gamma/\Gamma_{a+b}], &\text{if $a< i\le a+b$,}\\
(\mathbb{Z}/p^a)[\Gamma/\Gamma_{i}] \xrightarrow{T_i} (\mathbb{Z}/p^a)[\Gamma/\Gamma_{i-1}] \xrightarrow{} (\mathbb{Z}/p^a)[\Gamma/\Gamma_{i}], &\text{if $a+b< i\le n.$}
\end{cases}\]
Here the two unlabelled arrows are the natural projection maps and $T_i$ sends each element $\gamma$ of $\Gamma/\Gamma_i$ to the sum in $(\mathbb{Z}/p^a)[\Gamma/\Gamma_{i-1}]$ of all elements of 
$\Gamma/\Gamma_{i-1}$ that project to $\gamma$.   
\end{itemize}
\end{lemma}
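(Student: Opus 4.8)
The plan is to prove parts (i) and (ii) by a standard idempotent/Krull–Schmidt argument on the lattices $M_{a,b}$, and then part (iii) by a direct cohomology computation which is the real content of the lemma. For (i), $M_{a,0}=\mathbb{Z}_p[\Gamma](\sigma^{p^c}-1)$ is the kernel of the augmentation-type map $\mathbb{Z}_p[\Gamma]\to\mathbb{Z}_p[\Gamma/\Gamma_c]$ (multiplication by the ``norm'' $\sum_{\gamma\in\Gamma_c}\gamma$ identifies this image with $(\sigma^{p^c}-1)\mathbb{Z}_p[\Gamma]$ up to the standard exact sequence), hence is $\mathbb{Z}_p$-free of rank $p^n-p^c$; to see indecomposability I would note that after inverting $p$ it becomes $\mathbb{Q}_p[\Gamma](\sigma^{p^c}-1)=\bigoplus_{\chi:\,\chi|_{\Gamma_c}\neq 1}\mathbb{Q}_p(\chi)$, and more usefully observe that $M_{a,0}$ is cyclic as a $\mathbb{Z}_p[\Gamma]$-module generated by $\sigma^{p^c}-1$, with endomorphism ring a local ring — concretely $\mathrm{End}_{\mathbb{Z}_p[\Gamma]}(M_{a,0})$ injects into $\mathbb{Z}_p[\Gamma]/\mathrm{Ann}$, whose only idempotents are $0,1$ because the relevant quotient of $\mathbb{Z}_p[\Gamma]$ is local (its reduction mod $p$ is $\mathbb{F}_p[\Gamma]/(\text{stuff})$, a local ring since $\Gamma$ is a $p$-group). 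For (ii), $M_{a,b}=\mathbb{Z}_p[\Gamma](p^a,\sigma^{p^c}-1)$ sits in the pullback/amalgam describing a lattice that is $\mathbb{Z}_p$-free (it is a full-rank sublattice of $\mathbb{Z}_p[\Gamma]$, being the preimage of $p^a(\mathbb{Z}_p[\Gamma]/(\sigma^{p^c}-1))$), and again I would prove indecomposability by showing it is cyclic over $\mathbb{Z}_p[\Gamma]$ with local endomorphism ring; alternatively one can cite that these are exactly Yakovlev's/Heller–Reiner's explicit indecomposables, but it is cleaner to give the local-endomorphism-ring argument directly.

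For part (iii), the strategy is to compute $H^1(\Gamma_i, M_{a,b})$ together with the restriction and corestriction maps, using the $\Gamma$-module structure of $M_{a,b}$ directly. The key observation is that for the cyclic group $\Gamma_i=\langle\sigma^{p^{n-i}}\rangle$, Tate cohomology of any $\mathbb{Z}_p[\Gamma_i]$-lattice $M$ is $2$-periodic and $H^1(\Gamma_i,M)=\hat H^{-1}(\Gamma_i,M)=\ker(N_i)/(\sigma^{p^{n-i}}-1)M$ where $N_i=\sum_{j=0}^{p^i-1}\sigma^{jp^{n-i}}$ is the norm element for $\Gamma_i$. So I would, for each of the three ranges of $i$, write down $\ker(N_i|_{M_{a,b}})$ and $(\sigma^{p^{n-i}}-1)M_{a,b}$ explicitly inside $\mathbb{Z}_p[\Gamma]$ (remembering $M_{a,b}$ is realised as an ideal-like submodule of $\mathbb{Z}_p[\Gamma]$), identify the quotient with the asserted cyclic module $(\mathbb{Z}/p^{?})[\Gamma/\Gamma_{a+b}]$ or $(\mathbb{Z}/p^a)[\Gamma/\Gamma_i]$, and then trace through what restriction $H^1(\Gamma_i,M)\to H^1(\Gamma_{i-1},M)$ and corestriction $H^1(\Gamma_{i-1},M)\to H^1(\Gamma_i,M)$ do under these identifications. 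On cohomology of $M$ these maps, viewed via the $\hat H^{-1}$-description, are induced by the inclusion $\ker(N_i)\hookrightarrow\ker(N_{i-1})$ (that is restriction raising the index, careful: $\Gamma_{i-1}\subset\Gamma_i$, so ``res'' goes from the bigger group $\Gamma_i$ down to $\Gamma_{i-1}$) and by multiplication by $\sum_{\gamma\in\Gamma_i/\Gamma_{i-1}}\gamma$ in the other direction; the composites being multiplication by $p$ and by $\sum_{\gamma\in\Gamma_{i}/\Gamma_{i-1}}\gamma$ is exactly the general relation built into the definition of $\mathfrak{M}_n$, so it suffices to pin down each single map and match it to the stated $\times p$, $\mathrm{id}$, projection, or $T_i$.

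Concretely: when $1<i\le a$ the lattice $M_{a,b}$ looks, through $\Gamma_i$-cohomology, like $p^a$ times a copy of $\mathbb{Z}_p[\Gamma/\Gamma_{a+b}]$ sitting above $\mathbb{Z}_p[\Gamma]$, and the ``$p^a$-part'' makes the $H^1$ come out as $(\mathbb{Z}/p^i)[\Gamma/\Gamma_{a+b}]$ with res the natural reduction $(\mathbb{Z}/p^i)\to(\mathbb{Z}/p^{i-1})$ and cor the multiplication by $p$ map; when $a<i\le a+b$ the exponent stabilises at $p^a$ (the $p^a$ in the generator dominates), giving $(\mathbb{Z}/p^a)[\Gamma/\Gamma_{a+b}]$ with res now the identity and cor still $\times p$; and when $a+b<i\le n$ the relevant invariants involve $\Gamma_i$ acting nontrivially through the quotient $\Gamma/\Gamma_i$, so $H^1(\Gamma_i,M_{a,b})\cong(\mathbb{Z}/p^a)[\Gamma/\Gamma_i]$ and the transition maps become the projection $\Gamma/\Gamma_{i-1}\twoheadrightarrow\Gamma/\Gamma_i$ one way and its ``trace'' $T_i$ the other. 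I expect the main obstacle to be the third range: keeping straight which of res/cor is the projection and which is $T_i$, and verifying the group-ring coefficients do not pick up spurious $p$-torsion when $\Gamma_i\nsubseteq\ker(\text{action})$ — this requires carefully writing $M_{a,b}$ as an explicit $\mathbb{Z}_p[\Gamma_i]$-module (a direct sum of copies of $\mathbb{Z}_p[\Gamma_i/(\Gamma_i\cap\Gamma_{a+b})]$ and $\mathbb{Z}_p[\Gamma_i]$, indexed by cosets of $\Gamma_i$ in $\Gamma$) and computing $\hat H^{-1}$ of each piece. The first two ranges are more routine, amounting to the observation that $M_{a,b}$ restricted to $\Gamma_i$ (for $i\le a+b$) is a direct sum of copies of $\mathbb{Z}_p[\Gamma_i]$ and $p^a\mathbb{Z}_p[\Gamma_i]\oplus(\text{norm-type summand})$, whose Tate cohomology is immediate.
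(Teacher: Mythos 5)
Your argument for part (ii) has a genuine gap: for $b\neq 0$ the ideal $M_{a,b}=\mathbb{Z}_p[\Gamma](p^{a},\sigma^{p^{c}}-1)$ is \emph{not} cyclic over $\mathbb{Z}_p[\Gamma]$. It has finite index in $\mathbb{Z}_p[\Gamma]$ (it contains $p^a\mathbb{Z}_p[\Gamma]$), so a single generator would be a unit of $\mathbb{Q}_p[\Gamma]$ and $M_{a,b}$ would be free of rank one, hence cohomologically trivial, contradicting the non-vanishing of $H^1(\Gamma_n,M_{a,b})$ asserted in (iii); concretely, for $n=2$, $a=b=1$ one gets the maximal ideal $(p,\sigma-1)$, which needs two generators. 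So the ``cyclic module over a local ring'' argument proves only (i), and the fallback of ``citing Heller--Reiner/Yakovlev'' is not available either: for $n>2$ the indecomposables are not classified, and the indecomposability of these particular ideals is precisely what must be proved. The paper's route is genuinely different and is the real content of (i)--(ii): it first proves (iii), notes that each group appearing there has cyclic $\Gamma$-coinvariants and is therefore an indecomposable $\mathbb{Z}_p[\Gamma]$-module by Nakayama, so that the diagram $\Delta(M_{a,b})$ is indecomposable; Yakovlev's theorem (via Remark \ref{krull cons}) then forces any complementary summand of $M_{a,b}$ to be a sum of modules $\mathbb{Z}_p[\Gamma/\Gamma_s]$, the identity $\mathbb{Q}_p\otimes_{\mathbb{Z}_p}M_{a,b}\cong\mathbb{Q}_p[\Gamma]$ allows at most one such summand, and an explicit check shows this can occur only when $b=0$.

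In part (iii) the mechanism you state for the maps is backwards. On $\hat H^{-1}(\Gamma_i,M)=\ker(N_{\Gamma_i})/I_{\Gamma_i}M$ it is \emph{corestriction} $\hat H^{-1}(\Gamma_{i-1},M)\to\hat H^{-1}(\Gamma_i,M)$ that is induced by the containment $\ker(N_{\Gamma_{i-1}})\subseteq\ker(N_{\Gamma_i})$ (note the inclusion goes this way; $\ker(N_{\Gamma_i})$ is not contained in $\ker(N_{\Gamma_{i-1}})$, so the map you call restriction is not even defined as written), while \emph{restriction} is induced by multiplication by $\sum_{\gamma\in\Gamma_i/\Gamma_{i-1}}\gamma$. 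Since the entire point of (iii) is to decide which arrow is the projection/identity and which is $T_i$ or $\times p$, following your formulas literally yields the transposed diagram, which is a different object of $\mathfrak{M}_n$; and beyond this the computation is only sketched. The paper's computation is also simpler than the $\hat H^{-1}$ route you propose: from the exact sequence $0\to I_{a,b}\to\mathbb{Z}_p[\Gamma]\to(\mathbb{Z}/p^{a})[\Gamma/\Gamma_{a+b}]\to 0$ and the cohomological triviality of $\mathbb{Z}_p[\Gamma]$ one gets
\[ H^1(\Gamma_i,I_{a,b})\cong\mathrm{coker}\bigl((\mathbb{Z}_p[\Gamma])^{\Gamma_i}\to((\mathbb{Z}/p^{a})[\Gamma/\Gamma_{a+b}])^{\Gamma_i}\bigr), \]
from which the three cases and the res/cor maps follow by a short direct calculation; I recommend reworking (iii) along these lines.
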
 
 
\begin{proof} We set $I_{a,b} := \mathbb{Z}_p[\Gamma](p^{a}, \sigma^{p^{c}}-1)$ (so that $I_{a,b} = M_{a,b}$ if $b\not= 0$). Then, in all cases, there is an 
exact sequence of $\mathbb{Z}_p[\Gamma]$-modules $0 \longrightarrow I_{a,b} \longrightarrow \mathbb{Z}_{p}[\Gamma] \longrightarrow (\mathbb{Z}/p^{a})[\Gamma/\Gamma_{a+b}] \longrightarrow 0$. Upon taking  $\Gamma_i$-cohomology of this sequence, one obtains an exact sequence of $\mathbb{Z}_p[\Gamma/\Gamma_i]$-modules 
\begin{equation*}
(\mathbb{Z}_{p}[\Gamma])^{\Gamma_{i}} \longrightarrow \big ( (\mathbb{Z}/p^{a})[\Gamma/\Gamma_{a+b}] \big )^{\Gamma_{i}} \longrightarrow H^{1}(\Gamma_{i},I_{a,b}) \longrightarrow 0.
\end{equation*}
A direct calculation using these sequences shows that the morphisms 
\[ H^1(\Gamma_i,I_{a,b}) \xrightarrow{{\rm res}} H^1(\Gamma_{i-1},I_{a,b}) \xrightarrow{{\rm cor}} H^1(\Gamma_i,I_{a,b})\]
are equivalent to the morphisms in claim (iii). Since all of the $\mathbb{Z}_p[\Gamma]$-modules that occur in this description have cyclic $\Gamma$-coinvariants,  Nakayama's lemma (for the local ring $\mathbb{Z}_p[\Gamma]$) implies that they are each indecomposable. Hence, as all occurring maps are non-zero, the diagram $\Delta(I_{a,b})$ must itself be indecomposable. In particular, if $I_{a,b}$ is decomposable, then it must have the form $I_{a,b} = N_1\oplus N_2$ for an indecomposable module $N_1$ with $\Delta(N_1)\cong \Delta(I_{a,b})$, and $H^1(\Gamma_i,N_2) = 0$ for all $i \in [n]^\ast$. Remark \ref{krull cons} then implies that $N_2$ is isomorphic to a direct sum $\bigoplus_{t \in [n]^\ast}\mathbb{Z}_p[\Gamma/\Gamma_t]^{m_t}$ for suitable integers $m_t$. Now, if $N_2\not= (0)$, then, as $\mathbb{Q}_p\otimes_{\mathbb{Z}_p}I_{a,b} = \mathbb{Q}_p[\Gamma]$, there exists a unique $s\in [n]^\ast$ for which $m_s = 1$ and $m_t = 0$ for all $t\in [n]^\ast \setminus \{s\}$ and so $I_{a,b} \cong N_1\oplus \mathbb{Z}_p[\Gamma/\Gamma_s]$. This decomposition implies $\Sigma_s := {\sum}_{\gamma \in \Gamma_s}\gamma \in \mathbb{Z}_p[\Gamma]$ acts as the zero map on $N_1$ and hence that $I_{a,b}$ is preserved by the action of $p^{-s}\Sigma_s$. By explicit check, one finds this can only happen if $a=s$ and $b=0$ so that $I_{a,b} = M_{a,0} \oplus \mathbb{Z}_p[\Gamma] ({\sum}_{\gamma \in \Gamma_a} \gamma)$. In addition, $M_{a,0}$
 is a cyclic module over the local ring $\mathbb{Z}_p[\Gamma]$ and so indecomposable. This verifies all of the stated claims. \end{proof}

\subsection{The proof of Theorem \ref{intro thm}} \label{unit group section}

We write $\mathcal{C}_n$ for the class of pairs $(L/K, S)$ comprising a cyclic extension $L/K$ of number fields of degree $p^n$ and a finite set $S$ of places of $K$ satisfying Hypothesis \ref{key hyp} and, for every $(L/K,S)$ in $\mathcal{C}_n$, we fix an identification of $G:= G(L/K)$ with $\Gamma$. 
 Before starting the proof of Theorem \ref{intro thm}, we recall that Heller and Reiner \cite{HellerReiner2} have shown  $\mathcal{I}_n$, the set of indecomposable $\mathbb{Z}_p[\Gamma]$-lattices fixed before, to be infinite if 
$n > 2$ and note that, as $(L/K,S)$ ranges over $\mathcal{C}_n$, the rank $\mathrm{rk}(U_{L,S})$ is unbounded. Given these facts, the result of Theorem \ref{intro thm} is therefore, a priori, far from clear.  

Turning now to its proof, we introduce some useful notation. Firstly, $J$ and $J'$ will henceforth always denote non-trivial subgroups of $\Gamma$ (as the group $H^1(\{1\}, U_{L,S})$ vanishes and so plays no role in Yakovlev's theory); then, for subgroups $H \subseteq H'$ of $\Gamma$, and each $(L/K,S)$ in $\mathcal{C}_n$, we set 
\[ \Omega_{L/K}^{H,H'} := \{\mathfrak{p} \in R_{L/K}: I(\mathfrak{p}) = H, G(\mathfrak{p}) = H'\}\quad\text{and}\quad t_{L/K}^{H,H'} := |\Omega_{L/K}^{H,H'}|.\] 
Then, assuming $t_{L/K}^{H,H'} > 2$, the argument of 
Proposition \ref{generators}(iii) gives, for each subgroup $J$ of $\Gamma$, and $k \in t_{L/K}^{H,H'}\setminus \{1,2\}$, an ideal 
\[ \mathfrak{b}^{H,H'}_k(L/K,S,J)\in (I_L)^{JH'} \cap \langle S \rangle_L P_L  \subseteq (I_L)^J\cap \langle S \rangle_L P_L \] 
as follows: with $\mathfrak{b}^{H,H'}_k(L/K,S,H')$ denoting the ideal $\mathfrak{b}_{k,L^{H'}}$ constructed in the proof of Proposition \ref{generators}(iii) for the case $\Omega = \Omega^{H,H'}_{L/K}$, we set 
\begin{equation} \mathfrak{b}^{H,H'}_k(L/K,S,J) = \begin{cases} \mathfrak{b}^{H,H'}_k(L/K,S,H') &\text{if } J \subseteq H', \\ 
 {\rm Norm}_{L^{H'}/E}(\mathfrak{b}^{H,H'}_k(L/K,S,H')) &\text{if } H' \subsetneq J.\end{cases}\end{equation}
%
%
With these choices, for subgroups $J' < J$ with $|J/J'| = p$, one has  
\begin{equation}\label{relations} {\prod}_{\sigma \in J/J'}\sigma(\mathfrak{b}^{H,H'}_k(L/K,S,J')) = \begin{cases} \mathfrak{b}^{H,H'}_k(L/K,S,J)^p &\text{if } J \subseteq H', \\ 
 \mathfrak{b}^{H,H'}_k(L/K,S,J) &\text{if } H' \subsetneq J.\end{cases}\end{equation}
For each $J \le \Gamma$, $H \subseteq H'$ with $|t_{L/K}^{H,H'}|> 2$ and $k \in [t_{L/K}^{H,H'}]\setminus \{1,2\}$, we set 
\[ B^{H,H'}_k(L/K,S,J) := \mathbb{Z}_p[\Gamma/J] [ \mathfrak{b}^{H,H'}_k(L/K,S,J)]'_E \subseteq H^1(J,U_{L,S}).\]
Then, by the argument of Proposition \ref{generators}(iii), the module $B^{H,H'}_k(L/K,S,J)$ is isomorphic to $(\mathbb{Z}/|J\cap H|)[\Gamma/JH']$ and there exists an isomorphism of $\mathbb{Z}_p[\Gamma/J]$-modules
\begin{equation}\label{key decomp} H^1(J,U_{L,S}) \cong C_J \oplus {\bigoplus}_{(H,H')\in \Upsilon_{L/K}} {\bigoplus}_{k \in [t_{L/K}^{H,H'}]\setminus \{1,2\}}B^{H,H'}_k(L/K,S,J).  \end{equation} 
Here $\Upsilon_{L/K}$ denotes the collection of pairs $(H,H')$ of subgroups of $\Gamma$ with $H \le H'$ and $t_{L/K}^{H,H'}>2$ and $C_J$ is generated by the classes arising from (suitable) products of the conjugates of ideals in the set 
\[\Omega_{L/K,S,J}^\ast := \{ \mathfrak{q}_E\}\cup \{ \mathfrak{p}_{L/E} : \mathfrak{p} \in R_{L/K}, (I(\mathfrak{p}),G(\mathfrak{p}))\notin \Upsilon_{L/K}\} \cup \{ \mathfrak{p}_{L/E} : \mathfrak{p} \in {\bigcup}_{(H,H')}\Omega_{H,H'} \} \]
where $(H,H')$ runs over $\Upsilon_{L/K}$ and $\Omega_{H,H'}$ is a subset of $\Omega_{L/K}^{H,H'}$ of cardinality $2$ (corresponding to the first two places in the ordering of $\Omega_{L/K}^{H,H'}$ fixed in the proof of Proposition \ref{generators}(iii)).  

Next we note that, with respect to the identification in Lemma \ref{kl iso}, for subgroups $J' \le J$, the corestriction $H^1(J',U_{L,S}) \to H^1(J,U_{L,S})$ and restriction $H^1(J,U_{L,S})\to H^1(J',U_{L,S})$ maps are respectively induced by the map $(I_{L})^{J'} \to (I_L)^J$ sending each $\mathfrak{a}$ to $\prod_{\sigma\in J/J'}\sigma(\mathfrak{a})$ and by the inclusion 
$(I_L)^{J} \subseteq (I_L)^{J'}$. From (\ref{relations}), we can therefore deduce that, as $J$ varies, the decompositions (\ref{key decomp}) respect the relevant restriction and corestriction maps and so  induce a decomposition of Yakovlev diagrams 
\begin{equation*}\label{almost} \Delta(U_{L,S}) = \Delta_1(U_{L,S}) \oplus {\bigoplus}_{(H,H')\in \Upsilon_{L/K}} {\bigoplus}_{k \in [t_{L/K}^{H,H'}]\setminus \{1,2\}} \Delta_{k}^{H,H'}(U_{L,S}),\end{equation*}
where $\Delta_1(U_{L,S})$ is constructed from $\{C_J\}_{J \le \Gamma}$ and $\Delta_{k}^{H,H'}(U_{L,S})$ from $\{B_k^{H,H'}(L/K,S,J)\}_{J \le \Gamma}$. In addition, by comparing 
(\ref{relations}) to the result of Lemma \ref{explicit yak diagrams}(iii), one deduces that each diagram $\Delta_k^{H,H'}(U_{L,S})$ is isomorphic to $\Delta (M_{H,H'})$, with $M_{H,H'} := M_{a,b}$ for integers $a$ and $b$ specified by $|H| = p^{a}$ and $|H'/H| = p^{b}$. The above decomposition is thus equivalent to an isomorphism 
\begin{equation}\label{yak iso} \Delta(U_{L,S}) \cong \Delta_1(U_{L,S}) \oplus \Delta\bigl({\bigoplus}_{(H,H')\in \Upsilon_{L/K}} 
(M_{H,H'})^{ (t_{L/K}^{H,H'} -2)}\bigr)\end{equation}
in $\mathfrak{M}_n$. Finally we note that $|\Omega_{L/K,S,J}^\ast| \le 1 + 2B_n$ with $B_n$ the number of subgroup pairs $(H,H')$ of $\Gamma$ and hence that the order of each (finite) group $C_J$ is bounded by a number that depends only on $p$ and $n$. Thus, as $(L/K,S)$ ranges over $\mathcal{C}_n$, the number of isomorphism classes of possible diagrams $\Delta_1(U_{L,S})$ is also bounded by a number depending only on $p$ and $n$. Hence, by combining the isomorphism (\ref{yak iso}) with the observation in Remark \ref{krull cons}, we can finally deduce that, as $(L/K,S)$ ranges over $\mathcal{C}_n$, the number of modules in $\mathcal{I}_n$ that can arise in the Krull-Schmidt decomposition of at least one of the lattices $U_{L,S}$ is finite. This completes the proof of Theorem \ref{intro thm}.





\subsection{Some special cases}

There are at least two situations in which a closer analysis of the above argument can give more information. Firstly, if $n$ is `small', then the $\mathbb{Z}[\Gamma]$-module structures of terms in $\Delta(U_{L,S})$ are relatively simple and the categories ${\rm Lat}_n$ and $\mathfrak{M}_n$ are even completely understood for $n \in \{1,2\}$ (cf. \cite{hr} and \cite[Th. 5]{Yakovlev1} respectively). Hence, the argument of Proposition \ref{generators}(iii) can sometimes give an effective means of obtaining the full Krull-Schmidt decomposition of $U_{L,S}$  (see \cite{KumonLim} for results in this direction for cyclic extensions of degree dividing $p^3$). Secondly, if $H_{L,S}$ is a cyclic extension of $K$, then the argument in \S\ref{unit group section} can be simplified and leads to the following result.




\begin{theorem}\label{last thm} Let $L/K$ be a finite cyclic $p$-extension of number fields with ${\rm Norm}_{L/K}(\mu_L) = \mu_K$ and $S$ a finite set of places of $K$  such that $H_{L,S}/K$ is cyclic. Then the $\mathbb{Z}_p[G(L/K)]$-module structure of $U_{L,S}$ depends only on the ramification and inertia degrees of the places in $S \cup R_{L/K}$. In particular, if $L/K$ is unramified and all places in $S$ split completely in $L$, then there exists an isomorphism of  $\mathbb{Z}_p[G(L/K)]$-modules 
\[ U_{L,S}\cong \bigl(\mathbb{Z}_p[G(L/K)]/({\sum}_{g \in G(L/K)}g)\bigr)\oplus \mathbb{Z}_p[G(L/K)]^{({\rm rk}(U_K) + |S|)}.\]
\end{theorem}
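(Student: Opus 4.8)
The plan is to specialise the machinery of \S\ref{unit group section} to the case in which $H_{L,S}/K$ is cyclic. The key simplification is that cyclicity of $H_{L,S}/K$ forces the decomposition and inertia data to collapse: for the group $\Gamma \cong G(L/K) = G$, since $G(H_{L,S}/K)$ is cyclic of $p$-power order and contains $G(H_{L,S}/L) = A_{L,S}$ as a subgroup on which $G$ acts trivially (being a subgroup of an abelian group), one gets that for every intermediate field $E$ the ideal class data is rigidly controlled. Concretely, I would first observe that for each $\mathfrak{p} \in R_{L/K}$ the pair $(I(\mathfrak{p}), G(\mathfrak{p}))$ takes one of the finitely many values indexed by subgroup pairs $(H,H')$ of $\Gamma$, but — crucially — cyclicity of $H_{L,S}/K$ pins down the integers $m(j)$, $n(k)$ (from the proof of Proposition \ref{generators}(iii)) and indeed the entire diagram $\Delta(U_{L,S})$ up to the combinatorial data $\{t_{L/K}^{H,H'}\}$ together with the residue/ramification degrees of the finitely many places in $S \cup R_{L/K}$.

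The main steps I would carry out are: (1) Run the argument leading to (\ref{yak iso}), noting that under the cyclicity hypothesis the ``error term'' $\Delta_1(U_{L,S})$ built from $\{C_J\}_{J\le\Gamma}$ is itself determined (not merely bounded) by the ramification and inertia degrees of places in $S\cup R_{L/K}$ — here I would use Lemma \ref{kl iso} and Proposition \ref{generators} to see that the classes $[\mathfrak{q}_E]'_E$ and $[\mathfrak{p}_{L/E}]'_E$ in $A_{E,S}\cong G(H_{E,S}/E)$ are pinned down by the inclusion $H_{E,S}\subseteq H_{L,S}$ and the cyclicity. (2) Conclude via Proposition \ref{yak prop} and Remark \ref{krull cons} that $\Delta(U_{L,S})$ — hence $U_{L,S}^\dagger$ and the exponents $\{t_i\}$ in (\ref{ks iso}) — is determined by this finite set of data; the global rank $\mathrm{rk}(U_{L,S})$ is fixed by the unit-rank formula and the ramification behaviour, so $U_{L,S}$ itself is determined. (3) For the unramified, totally split special case: here $R_{L/K} = \emptyset$ and $\Upsilon_{L/K}=\emptyset$, so $\Delta(U_{L,S})$ reduces to $\Delta_1(U_{L,S})$, and Lemma \ref{kl iso} gives $H^1(J,U_{L,S}) \cong (P_{L,S})^J/P_{E,S}$, which one computes directly since $A_{L,S}$ is cyclic and generated by (the image of) a single prime with trivial $G$-action (Lemma \ref{(C1)}). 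A direct check — or a matching of Yakovlev diagrams against $\Delta(\mathbb{Z}_p[G]/(\sum_g g))\oplus\Delta(\mathbb{Z}_p[G])^{r}$, noting $H^1(J,\mathbb{Z}_p[G])=0$ and $H^1(J,\mathbb{Z}_p[G]/(\sum_g g))$ is cyclic of the right order — shows $\Delta(U_{L,S})$ agrees with that of the claimed module; then Proposition \ref{yak prop} plus a rank count (the right-hand side has rank $1 + p^n(\mathrm{rk}(U_K)+|S|) - ? $, matched against $\mathrm{rk}(U_{L,S})$ via Dirichlet and the fact that all places of $S$ split completely) upgrades the diagram isomorphism to a genuine module isomorphism, since no stray free summands $\mathbb{Z}_p[\Gamma/\Gamma_i]$ with $i<n$ can appear once the rank is accounted for.

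The step I expect to be the main obstacle is the first claim — that the ``residual'' data $\Delta_1(U_{L,S})$ (equivalently, the classes of the ideals $\mathfrak{q}_E$ and the $\mathfrak{p}_{L/E}$ for $\mathfrak{p}$ in the ``exceptional'' pairs, together with the integers $m(j), n(k)$) is genuinely \emph{determined}, not just bounded, by the ramification and inertia degrees. This requires a careful bookkeeping argument: one must show that cyclicity of $G(H_{L,S}/K)$, together with the fact that each prime $\mathfrak{p}_{L/E}$ maps into $A_{E,S}$ to an element whose order is forced by $|J\cap I(\mathfrak{p})|$ and by the structure of the cyclic group $G(H_{E,S}/E)\hookrightarrow G(H_{L,S}/K)$, leaves no freedom in the resulting Yakovlev diagram beyond the listed invariants. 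I would handle this by working in the cyclic group $G(H_{L,S}/K)$ directly: the image of $\mathfrak{p}_{L/E}$ under $\iota$ of Lemma \ref{kl iso} is the Artin symbol of $\mathfrak{p}_{L/E}$, which in a cyclic extension is determined up to the ambiguity of a single generator, and one checks this ambiguity is absorbed into the choice already made for $\mathfrak{q}_E$. Once this rigidity is established the remaining steps are the routine diagram-chasing and rank-counting sketched above.
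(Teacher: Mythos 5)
Your overall strategy is the paper's: under the cyclicity hypothesis, Lemma \ref{(C1)} makes every $A_{E,S}$ cyclic with trivial Galois action and generated by a single canonically chosen prime, the parametrisation of Lemma \ref{kl iso} and Proposition \ref{generators} then pins down each $H^1(J,U_{L,S})$ together with the restriction and corestriction maps (the paper records this as an explicit presentation $W_J$ on generators $Y_J$ and $X_{\mathfrak{p},J}$, absorbing exactly the unit ambiguity you identify as the main obstacle into the choice of $\mathfrak{q}_E$), and Yakovlev's theorem then reduces everything to determining the multiplicities $t_i$ in (\ref{ks iso}). So the ``rigidity'' step you worry about is fine and is handled essentially as you propose.

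The step that fails as written is the rank count. Knowing $\Delta(U_{L,S})$ determines $U_{L,S}$ only up to the $n+1$ unknown multiplicities $t_0,\ldots,t_n$ of the modules $\mathbb{Z}_p[\Gamma/\Gamma_i]$, and the single equation coming from the global rank cannot determine them: for example $\mathbb{Z}_p[\Gamma]$ and $\mathbb{Z}_p[\Gamma/\Gamma_i]^{p^i}$ have equal $\mathbb{Z}_p$-rank and both satisfy $H^1(\Gamma_j,-)=0$ for every $j$, so they are indistinguishable by the data you use. What is needed, and what the paper uses, is the full $\mathbb{Q}_p[\Gamma]$-module structure of $\mathbb{Q}_p\otimes_{\mathbb{Z}_p}U_{L,S}$, equivalently the ranks of all the invariant submodules $(U_{L,S})^{\Gamma_j}$, which the Dirichlet--Herbrand theorem supplies in terms of the decomposition groups of the archimedean and $S$-places; the successive differences $\mathrm{rk}(Z^{\Gamma_j})-\mathrm{rk}(Z^{\Gamma_{j+1}})$ then determine the $t_i$ recursively (formula (\ref{t_i formula})), and in the unramified totally split case semisimplicity of $\mathbb{Q}_p[\Gamma]$ forces $t_i=0$ for $i\neq 0$ and $t_0=\mathrm{rk}(U_K)+|S|$. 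Since you already invoke ``Dirichlet'' in the special case, the repair is routine, but as stated the inference ``the global rank is fixed, so $U_{L,S}$ itself is determined'' does not close the argument.
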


\begin{proof} We assume $[L:K] = p^n$ and fix an identification $G(L/K) = \Gamma$. We also write $G(S)$ for the subgroup  $\prod_{\tau \in S} G(\tau)$ of $\Gamma$ that is generated by $\bigcup_{\tau \in S}G(\tau)$. 

At the outset we note that, if $A_{L,S} \neq (0)$, then, as $H_{L,S}/K$ is a cyclic $p$-extension, $L/K$ is unramified (so $R_{L/K} =\emptyset$), all places in $S$ split completely in $L$ and, for every intermediate field $E$ of $L/K$, one has $H_{E,S}=H_{L,S}$. On the other hand, if $A_{L,S} = (0)$, then $H_{E,S}$ is the maximum unramified extension of $E$ in $L$ in which all places in $S$ split completely. 

Next we note that, since $(L/K,S)$ satisfies the hypothesis of Lemma \ref{(C1)}, the results of Lemma \ref{(C1)}(ii) and (iii) imply that $G(E/K)$ acts trivially on $A_{E,S}$ and that the (equivalence class of) transfer and norm maps between the respective groups $\{A_{E,S}\}_E$ are uniquely determined by the orders of each group $A_{E,S}$. In particular, in this way one finds that each transfer map $A_{E,S} \to A_{L,S}$ is surjective and that $$|\ker(A_{E,S} \to A_{L,S})| = |(J G(S){\prod}_{\mathfrak{p} \in R_{L/K}} I(\mathfrak{p}))/(G(S){\prod}_{\mathfrak{p} \in R_{L/K}} I(\mathfrak{p}))|.$$

In the remainder of the argument, we consider separately the cases $R_{L/K}\not=\emptyset$ and $R_{L/K}=\emptyset$. Thus, until further notice, we assume  $R_{L/K}\not=\emptyset$. We write $L'$ for the maximal unramified extension of $K$ in $L$ and set $\Gamma' := G(L/L')$ and $Z := U_{L,S}$. In this case one has $A_{L,S}=(0)$ and so the module $H^1(J,Z) = (P_{L,S})^J/P_{E,S} = (I_{L,S})^J/P_{E,S}$ lies in a canonical short exact sequence
\begin{equation}\label{cap ses}0 \to A_{E,S} \to H^1(J,Z) \to (I_{L,S})^J/I_{E,S} \to 0\end{equation}
and $(I_{L,S})^J/I_{E,S} \cong (I_L)^J/I_E$ is explicitly known via the isomorphism (\ref{decomp}). 

For $\mathfrak{p}\in R_{L/K}$ we set $J_0(\mathfrak{p}) := J \cap I(\mathfrak{p})$ and $J_1(\mathfrak{p}) := J \cap G(\mathfrak{p})$. Then one has  $\mathfrak{p}_{L/E}^{|J_0(\mathfrak{p})|} = \mathfrak{p}_E\mathcal{O}_L$, with $\mathfrak{p}_E = \mathfrak{p}_L \cap \mathcal{O}_E$ and so the element $|J_0(\mathfrak{p})|[ \mathfrak{p}_{L/E}]'_E$ of $H^1(J,Z)$ is represented (via (\ref{cap ses})) by the class $[\mathfrak{p}_E]_E\in A_{E,S} \subseteq H^1(J,Z)$. In addition, all of these classes $\{[\mathfrak{p}_E]_E\}$ are related to the single class $[\mathfrak{p}_{L(\mathfrak{p})}]_{L(\mathfrak{p})}$ with  $L(\mathfrak{p}) = L^{G(\mathfrak{p})}$ by norm and transfer maps. If $J\subseteq I(\mathfrak{p})$, then $A_{E,S} = (0)$, if $I(\mathfrak{p})\subseteq J \subseteq G(\mathfrak{p})$, then $\mathfrak{p}_E = \mathfrak{p}_{L(\mathfrak{p})}\mathcal{O}_E$ and if $G(\mathfrak{p})\subseteq J$, then $\mathfrak{p}_E = {\rm Norm}_{L(\mathfrak{p})/E}(\mathfrak{p}_{L(\mathfrak{p})})$.  
For every $E$, the index in $A_{E,S}$ of the subgroup generated by $[\mathfrak{p}_E]_E$ is equal to 
\[ e(J, \mathfrak{p}) : = (J\Gamma' G(S) : J_1(\mathfrak{p}) \Gamma'G(S)).\] 
Let us fix a place $\mathfrak{q} \not\in R_{L/K} \cup S$ of $K$ that is inert in $H_{L,S}$ and write $\mathfrak{q}_E$ for the place of $E$ above $\mathfrak{q}$. Then, for each $\mathfrak{p} \in R_{L/K}$, Lemma \ref{(C1)}(ii) implies there exists an integer $u(\mathfrak{p})$ that is prime to $p$ and such that  
\[ [(\mathfrak{p}_{L/L(\mathfrak{p})})^{|I(\mathfrak{p})|u(\mathfrak{p})} ]'_{L(\mathfrak{p})} = [ \mathfrak{q}^{e(G(\mathfrak{p}), \mathfrak{p})}_{L(\mathfrak{p})} ]'_{L(\mathfrak{p})} =  [  \mathfrak{q}_{L(\mathfrak{p})} ]'_{L(\mathfrak{p})}.\] 
In particular, by the preceding remark, for every $E$ one has 
\[ [ (\mathfrak{p}_{L/E})^{|J_0(\mathfrak{p})|u(\mathfrak{p})} ]'_E = [ \mathfrak{q}^{e(J, \mathfrak{p})}_{E} ]'_E.\] 

Now the argument of the proof of Proposition \ref{generators} (i) and (ii) implies $H^{1}(J,Z)$ is isomorphic to the $\mathbb{Z}[\Gamma/J]$-module $W_{J}$ with generators $\{ Y_J\}\cup \{ X_{\mathfrak{p},J}\}_{\mathfrak{p} \in R_{L/K}}$ and relations 
\[ |A_{E,S}|Y_{J}, \,\, \sigma Y_{J} = Y_{J},\,\, |J_0(\mathfrak{p})|X_{\mathfrak{p},J}= e(J, \mathfrak{p})Y_{J},\,\,\sigma^{|\Gamma/(G(\mathfrak{p})J)|}X_{\mathfrak{p},J}=X_{\mathfrak{p},J}.\]
Furthermore, by the construction of $\{u(\mathfrak{p})\}_{\mathfrak{p} \in R_{L/K}}$ and $\{\mathfrak{q}_E\}_{J < \Gamma}$, these presentations of $H^1(J,Z)$ are compatible with varying $J$ in the following sense. If $J'$ is the subgroup of $J$ of index $p$, then the restriction $H^{1}(J, Z) \to H^{1}(J', Z)$ and corestriction $H^{1}(J', Z) \to H^{1}(J, Z)$ maps correspond to the homomorphisms $\alpha_{J}: W_{J} \to W_{J'}$ and $\beta_J: W_{J'} \to W_{J}$ specified by 
\begin{align*}
\alpha_{J}(Y_{J}) = Y_{J'} \quad \text{and} \quad \alpha_{J}(X_{\mathfrak{p},J}) = &\begin{cases} X_{\mathfrak{p},J'} \, 
&\text{if } J \subseteq G(\mathfrak{p}) \\ T_{J/J'}(X_{\mathfrak{p},J'}) \, &\text{if } G(\mathfrak{p}) \subsetneq J,\end{cases}\\
\beta_{J}(Y_{J'}) = pY_{J} \quad \text{and} \quad \beta_{J}(X_{\mathfrak{p},J'}) = &\begin{cases} pX_{\mathfrak{p},J} \, 
&\text{if }  J \subseteq G(\mathfrak{p}) \\ X_{\mathfrak{p},J} &\text{if }  G(\mathfrak{p}) \subsetneq J. \end{cases} 
\end{align*}
This analysis shows that the isomorphism class in $\mathfrak{M}_n$ of the diagram $\Delta(U_{L,S})$ depends only on the groups $I(\mathfrak{p})$ and $G(\mathfrak{p})$ for $\mathfrak{p}$ in $R_{L/K}$, and $G(S)$. Hence, recalling the decomposition (\ref{ks iso}), to determine the isomorphism class of $Z$ itself, it is sufficient to determine the integers $t_i$. For $j \in [n]^{\ast}$, let $s_j$ be the number of primes $\mathfrak{p}$ of $S$ such that $\Gamma_j$ is the decomposition subgroup in  $\Gamma$ of the place of $L$ above $\mathfrak{p}$. Then one can determine the integers $t_i$ by using the fact, for each $j \in [n]^{\ast}$, that
\begin{align}\label{t_i formula} {\rm rk}\bigl(Z^{\Gamma_j}) - {\rm rk}\bigl(Z^{\Gamma_{j+1}})
=&\, {\rm rk}((Z^\dagger)^{\Gamma_j}) - {\rm rk}((Z^\dagger)^{\Gamma_{j+1}}) + (p^{n-j} - p^{n-j-1}) {\sum}_{i \in [j]^\ast} \, t_i \\
=&\, (p^{n-j} - p^{n-j-1}) ({\rm rk}(U_K) + 1 + {{\sum}}_{i \in [j]^\ast} \, s_i) \notag\end{align}
where the last equality follows by applying the Dirichlet-Herbrand Theorem to the $\mathbb{Q}_p[\Gamma]$-module $\mathbb{Q}_p\otimes_{\mathbb{Z}_p} U_{L,S}$ (cf. \cite[Th. I.3.7]{Gras2}). Since the isomorphism class of $Z^{\dagger}$ is determined by $\Delta(U_{L,S})$, we can determine $\{t_i\}_{i \in [n]^{\ast}}$ recursively.


%

This establishes the claimed result in the case $R_{L/K} \not= \emptyset$ and so in the rest of the argument we assume $R_{L/K} = \emptyset$ (so that $L/K$ is unramified). In this case, for a subgroup $J$ of $\Gamma$ one has $(I_{L,S})^J/I_{E,S}=(0)$ and so $H^1(J,Z)$ identifies with the kernel of the transfer map $A_{E,S} \to A_{L,S}$. By the observation made at the beginning of this proof, for each $i \in [n]^\ast$ the morphisms 
\[ H^1(\Gamma_i,Z) \xrightarrow{{\rm res}} H^1(\Gamma_{i-1},Z) \xrightarrow{{\rm cor}} H^1(\Gamma_i,Z)\]
are uniquely determined by $G(S)$. In particular, if all places of $S$ split completely in $L$, then one has $H_{E,S}=H_{L,S}$ for all intermediate fields $E$ of $L/K$ and the above morphisms identify with $\mathbb{Z}/p^i \xrightarrow{} \mathbb{Z}/p^{i-1} \to \mathbb{Z}/p^{i}$, where the first arrow is the natural projection map and the second sends $1$ to $p$. Given this description, an easy exercise shows that $\Delta(Z)$ is isomorphic in $\mathfrak{M}_n$ to $\Delta(N)$ for the indecomposable $\mathbb{Z}_p[\Gamma]$-lattice $N := \mathbb{Z}_p[\Gamma]/({\sum}_{\gamma \in \Gamma}\gamma)$. Just as above, it then follows that, for suitable (uniquely determined) integers $\{t_i\}_{i \in [n]^\ast}$, there is an 
isomorphism in ${\rm Lat}_n$
\[ Z \cong N \oplus {\bigoplus}_{i \in [n]^\ast} \mathbb{Z}_p[\Gamma/\Gamma_i]^{ t_i}.\] 
In addition, the Dirichlet-Herbrand Theorem for $S$-units implies the existence of an isomorphism 
\[ \mathbb{Q}_p \otimes_{\mathbb{Z}_p} Z \cong (\mathbb{Q}_p \otimes_{\mathbb{Z}_p} N) \oplus \mathbb{Q}_p[\Gamma]^{({\rm rk}(U_K) + |S|)}\]
 of $\mathbb{Q}_p[\Gamma]$-modules. Upon comparing these isomorphisms, and noting $\mathbb{Q}_p[\Gamma]$ is semisimple, it follows that $t_i = 0$ for $i \not= 0$ and $t_0 = {\rm rk}(U_K) + |S|$. This implies the claimed isomorphism.
\end{proof}

\section{Minkowski units}\label{Minkowski section} 

%

In this final section, we derive a consequence of Theorem \ref{intro thm} regarding the existence of independent Minkowski units (as discussed in the Introduction). 

To state the result, we use the family of field extensions $\mathcal{C}_n$ defined at the beginning of \S\ref{unit group section}. For $(L/K,S)$ in $\mathcal{C}_n$ we recall, from the argument in \S\ref{unit group section}, that $\Upsilon_{L/K}$ denotes the set of subgroup pairs of $\Gamma$ that arise as $(I(\mathfrak{p}),G(\mathfrak{p}))$ for at least {\em three} distinct places in $R_{L/K}$ and we set  
\[ R_{L/K}^{(3)} := \{ \mathfrak{p} \in R_{L/K}: (I(\mathfrak{p}),G(\mathfrak{p}))\in \Upsilon_{L/K}\}.\]
We also write $r_1(K)$ and $r_2(K)$ for the respective numbers of real and complex places of $K$, and $n_{S,L}$ for the number of places in $S$ that split completely in $L$.

\begin{corollary}\label{cft remark} There exists a natural number $N_{p,n}$ that depends only on $p$ and $n$ and has the following property: for each $(L/K,S)$ in  $\mathcal{C}_n$, one has   
\[ m_{L/K,S} = r_1(K)+r_2(K) + n_{S,L} + (2|\Upsilon_{L/K}| - |R^{(3)}_{L/K}|) + d_{L/K,S}\]
with $|d_{L/K,S}|\le N_{p,n}$.
\end{corollary}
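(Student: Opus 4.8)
The plan is to read off $m_{L/K,S}$ from the Krull--Schmidt decomposition \eqref{ks iso} of $Z := U_{L,S}$, since a family of $m$ independent Minkowski $S$-units corresponds precisely to $\mathbb{F}_p[\Gamma]^m$ being a direct summand of $Z/Z^p$, and by Nakayama this is controlled by the multiplicity $t_n$ of the free module $\mathbb{Z}_p[\Gamma] = \mathbb{Z}_p[\Gamma/\Gamma_n]$ appearing in \eqref{ks iso}. Concretely, I would first show $m_{L/K,S} = t_n$: the summand $U_{L,S}^\dagger$ has no free direct summand, and no indecomposable $\mathbb{Z}_p[\Gamma/\Gamma_i]$ with $i < n$ nor any indecomposable in $\mathcal{I}_n$ other than $\mathbb{Z}_p[\Gamma]$ can contribute a copy of $\mathbb{F}_p[\Gamma]$ to the mod-$p$ reduction (one checks $\mathbb{Z}_p[\Gamma/\Gamma_i]/p$ has $\mathbb{F}_p$-dimension $p^i < p^n$, and for a general indecomposable lattice $M\in\mathcal{I}_n$, a free summand of $M/pM$ over $\mathbb{F}_p[\Gamma]$ would force, via a standard lifting argument using projectivity of $\mathbb{Z}_p[\Gamma]$ over itself and Nakayama, a free summand of $M$ itself, contradicting indecomposability unless $M\cong\mathbb{Z}_p[\Gamma]$).

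Next I would compute $t_n$ via ranks, exactly as in the proof of Theorem~\ref{last thm}: applying the Dirichlet--Herbrand Theorem to $\mathbb{Q}_p\otimes_{\mathbb{Z}_p}U_{L,S}$ gives $\mathrm{rk}(U_{L,S}) = p^n(r_1(K)+r_2(K)+n_{S,L}-1)+1+(\text{contributions from non-split places of }S\text{ and archimedean places that split partially})$; more usefully, taking the alternating telescoping sum of $\mathrm{rk}(Z^{\Gamma_j})-\mathrm{rk}(Z^{\Gamma_{j+1}})$ as in \eqref{t_i formula}, the free rank $t_n$ is the coefficient of the ``top'' piece and equals $r_1(K)+r_2(K)+n_{S,L}$ plus the number of free summands $M_{H,H'}\cong\mathbb{Z}_p[\Gamma]$ (i.e.\ those with $(H,H')=(\{1\},\{1\})$, which never occurs since $\mathfrak{p}\in R_{L/K}$ has nontrivial inertia) coming from the decomposition \eqref{yak iso}. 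Here is the point: from \eqref{yak iso}, $U_{L,S}^\dagger$ differs from $\bigoplus_{(H,H')\in\Upsilon_{L/K}}(M_{H,H'})^{t_{L/K}^{H,H'}-2}$ only by summands isomorphic to various $\mathbb{Z}_p[\Gamma/\Gamma_i]$ and by a lattice $Z_1$ built from $\Delta_1(U_{L,S})$, whose ``size'' (rank of the associated $\dagger$-part and its multiplicities $t_i$) is bounded purely in terms of $p$ and $n$ because each $C_J$ is a finite group of order bounded in terms of $p,n$ (as noted at the end of \S\ref{unit group section}, $|\Omega_{L/K,S,J}^\ast|\le 1+2B_n$).

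Assembling these, $m_{L/K,S} = t_n = (\text{rank contribution } r_1(K)+r_2(K)+n_{S,L}) + \#\{\mathbb{Z}_p[\Gamma]\text{-summands among the }M_{H,H'}\} + (\text{bounded error from }\Delta_1)$. Since $M_{a,b}\cong\mathbb{Z}_p[\Gamma]$ exactly when $a=b=0$, and each $(H,H')\in\Upsilon_{L/K}$ with $|H|=p^a,|H'/H|=p^b$ contributes $t_{L/K}^{H,H'}-2$ copies, the total number of such $M_{H,H'}$ that equal $\mathbb{Z}_p[\Gamma]$ is governed by the $(H,H')=(\{1\},\{1\})$ term --- but since every $\mathfrak{p}\in R_{L/K}$ is ramified, $I(\mathfrak{p})\neq\{1\}$, so that term is absent; instead the quantity $2|\Upsilon_{L/K}|-|R^{(3)}_{L/K}| = -\sum_{(H,H')\in\Upsilon_{L/K}}(t_{L/K}^{H,H'}-2)$ captures precisely the deficit between the naive count $|R_{L/K}|$-type contribution and what actually appears freely, absorbed appropriately into the rank bookkeeping via the Dirichlet--Herbrand formula; the residual discrepancy $d_{L/K,S}$ between $m_{L/K,S}$ and $r_1(K)+r_2(K)+n_{S,L}+(2|\Upsilon_{L/K}|-|R^{(3)}_{L/K}|)$ is exactly the bounded contribution of $\Delta_1(U_{L,S})$, so one sets $N_{p,n}$ to be a uniform bound (depending only on $p$ and $n$) for the $t_n$-value and rank of any lattice that can arise as the $\dagger$-part of $\Delta_1$, which exists by the finiteness argument of Theorem~\ref{intro thm}. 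The main obstacle I anticipate is the careful bookkeeping in the telescoping rank identity: one must track precisely how the archimedean and non-split $S$-place contributions distribute across the $\Gamma_j$-fixed points and verify that, after subtracting the explicitly-computed multiplicities $t_{L/K}^{H,H'}-2$ of each $M_{H,H'}$, the leftover is genuinely bounded --- i.e.\ that no ``hidden'' unbounded free rank leaks in through the interaction of $\Delta_1$ with the other summands, which is where the Krull--Schmidt uniqueness (Proposition~\ref{yak prop} and Remark~\ref{krull cons}) must be invoked to keep the decomposition rigid.
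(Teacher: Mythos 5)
Your proposal is correct and follows essentially the same route as the paper: identify $m_{L/K,S}$ with the multiplicity of the free summand $\mathbb{Z}_p[\Gamma]=\mathbb{Z}_p[\Gamma/\Gamma_0]$ in the Krull--Schmidt decomposition (\ref{ks iso}) (note that with the paper's convention $|\Gamma_i|=p^i$ this multiplicity is $t_0$, not $t_n$), observe that the lattices $M_{H,H'}$ contribute no $\mathbb{F}_p[\Gamma]$-summand mod $p$, and then extract that multiplicity from the Dirichlet--Herbrand rank identity, each copy of $M_{H,H'}$ accounting for a rank drop of $p^n-p^{n-1}$ (whence the term $2|\Upsilon_{L/K}|-|R^{(3)}_{L/K}|=-\sum(t_{L/K}^{H,H'}-2)$) and the $\Delta_1$-piece contributing an error bounded solely in terms of $p$ and $n$. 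The only substantive deviation is that you invoke the sharper general statement that a free summand of $M/pM$ lifts to a free summand of $M$ for an arbitrary lattice $M$ (which is true, by relative injectivity of $\mathbb{Z}_p[\Gamma]$ for $\mathbb{Z}_p$-split sequences, and yields $m_{L/K,S}=t_0$ exactly), whereas the paper only needs the rank-based check for the specific lattices $M_{a,b}$ together with a bounded error for the residual piece $M_{L/K}$; both suffice.
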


\begin{proof}  The isomorphism (\ref{yak iso}) in $\mathfrak{M}_n$ implies the existence of a module $M_{L/K} = M_{L/K,S}$ in ${\rm Lat}_n$ for which there is an isomorphism (in ${\rm Lat}_n$) of the form 
\begin{equation}\label{iso-dagger}
U_{L,S}^{\dagger} \cong M_{L/K} \oplus {\bigoplus}_{(H,H') \in \Upsilon_{L/K}}(M_{H,H'})^{(t_{L/K}^{H,H'} -2)},
\end{equation}
and one has $\mathrm{rk}(M_{L/K}) \le N'_{p,n}$ for an integer $N'_{p,n}$ that depends only on $p$ and $n$. 

We first claim that, for each of the $\mathbb{Z}_p[\Gamma]$-lattices $M_{a,b}$ in Lemma \ref{explicit yak diagrams}, the corresponding $\mathbb{F}_p[\Gamma]$-module $M_{a,b}/pM_{a,b}$  does not contain $\mathbb{F}_p[\Gamma]$ as a direct summand. To see this, note ${\rm rk}(M_{a,b}) \leq p^n$ and so $M_{a,b}/pM_{a,b}$ can have a direct summand isomorphic to $\mathbb{F}_p[\Gamma]$ only if $M_{a,b} \cong \mathbb{Z}_p[\Gamma]$ and, since $M_{a,b}$ is not cohomologically-trivial, this is not true.  

Hence, with $t_0$ the integer that occurs in $(\ref{ks iso})$, the isomorphism (\ref{iso-dagger}) implies that the non-negative integer $m_{L/K,S}-t_0$ is bounded above by the multiplicity of $\mathbb{F}_p[\Gamma]$ in 
$M_{L/K}/pM_{L/K}$. In particular, since $\mathrm{rk}(M_{L/K})$ is bounded solely in terms of $p$ and $n$, there exists a natural number $N''_{p,n}$ that depends only on $p$ and $n$ and is such that $0 \le m_{L/K,S}-t_0 \le N''_{p,n}.$ 

Next we set $r_1 = r_1(K)$ and $r_2 = r_2(K)$ and note that the formula (\ref{t_i formula}) (in which the term $s_0$ is equal to $n_{S,L}$) implies that 
\begin{equation}\label{first rank equality}
 (p^n-p^{n-1})(r_1+r_2+ n_{S,L}) =\, {\rm rk}(U_{L,S}^{\dagger}) - {\rm rk}\big ( (U_{L,S}^{\dagger})^{\Gamma_1}\big ) + (p^n-p^{n-1})t_0.\end{equation} 
%
%
%

In addition, for $(H,H') \in \Upsilon_{L/K}$, a straightforward computation (using the fact $\mathbb{Q}_p\otimes_{\mathbb{Z}_p}I_{a,b} = \mathbb{Q}_p[\Gamma]$ for each of the lattices $I_{a,b}$ that occur in the proof of Lemma \ref{explicit yak diagrams}) shows that
\[ {\rm rk}(M_{H,H'}) - {\rm rk}((M_{H,H'})^{\Gamma_1})=p^n-p^{n-1}.\] 

From the isomorphism $(\ref{iso-dagger})$, one therefore deduces that    
\[ {\rm rk}(U_{L,S}^{\dagger}) - {\rm rk}\big ( (U_{L,S}^{\dagger})^{\Gamma_1}\big ) =  d'_{L/K,S} + {\sum}_{(H,H') \in \Upsilon_{L/K}}\!\!\!\! (p^n-p^{n-1})( t_{L/K}^{H,H'} - 2 ),\]
with $d'_{L/K,S} := {\rm rk}(M_{L/K}) - {\rm rk}(M_{L/K}^{\Gamma_1})$ (so that $0 \le d'_{L/K,S} \le N'_{p,n}$). Upon substituting this into (\ref{first rank equality}), and dividing the resulting equality by $p^n-p^{n-1}$, one deduces that   
\begin{align*} t_0 =&\, r_1+r_2 + n_{S,L} - {\sum}_{(H,H') \in \Upsilon_{L/K}}\!\!\!\! ( t_{L/K}^{H,H'} - 2 ) - d'_{L/K,S}/(p^n-p^{n-1})\\
                       =&\, r_1+r_2 + n_{S,L} + (2|\Upsilon_{L/K}| - |R^{(3)}_{L/K}|) - d'_{L/K,S}/(p^n-p^{n-1}),\end{align*}
where the second equality is true since ${\sum}_{(H,H') \in \Upsilon_{L/K}}t_{L/K}^{H,H'} = |R^{(3)}_{L/K}|,$ 
as follows from a direct comparison of the definitions of the terms $R_{L/K}^{(3)}$, $\Upsilon_{L/K}$ and $t_{L/K}^{H,H'}$. 

The claimed result is therefore obtained by setting 
\[ d_{L/K,S} := -d'_{L/K,S}/(p^n-p^{n-1}) + (m_{L/K,S} - t_0)\]
and taking $N_{p,n}$ to be the integer part of $N'_{p,n}/(p^n-p^{n-1}) + N''_{p,n}$. 
\end{proof}

In \cite[\S 5]{HMR3}, the authors construct families of Galois extensions in which Minkowski units can be shown to exist. We now finish this section by using Corollary \ref{cft remark} to describe new families of extensions in which there are many Minkowski units. In particular, the following examples show that, for each $n$, the quantity $m_{L/K,S}$ is unbounded as $(L/K,S)$ ranges over $\mathcal{C}_n$. We remark that these examples are qualitatively different from those in \cite{HMR3} since the existence of Minkowski units is not being forced either by tame ramification or by large numbers of ramified places. 

\begin{examples}\label{last exams} In order to show that $m_{L/K,S}$ is unbounded as $(L/K,S)$ ranges over $\mathcal{C}_n$ it is sufficient, by Corollary \ref{cft remark}, to identify families of $(L/K,S)$ for which $|R_{L/K}|$ is bounded but $r_1(K)+r_2(K)+ n_{S,L}$ is unbounded. In particular, for a fixed extension $L/K$, the quantity $n_{S,L}$, and hence also $m_{L/K,S}$, is clearly unbounded as one increases the set $S$. Of more interest, however, is the fact (evidenced by the following examples) that the required conditions are also satisfied in cases with $S=\emptyset$. 

\begin{itemize}
\item[(i)] Assume $F$ has a unique $p$-adic place $\mathfrak{p}$ and $A_F = (0)$. Then, for the set $\Sigma = \{ \mathfrak{p}\}$, $G_{F,\Sigma}$ is the inertia subgroup of the unique $p$-adic place of $F_{\Sigma}$ (cf. Example \ref{Wingberg-exam}). Hence, $\mathfrak{p}$ is totally ramified in $F_{\Sigma}$ and $A_L=(0)$ for every finite extension $L$ of $F$ in $F_{\Sigma}$. The quantity $m_{L/K,\emptyset}$ is therefore unbounded as $L/K$ ranges over intermediate fields of the tower $F_{\Sigma}/F$ since each such extension is ramified precisely at the unique $p$-adic place. In addition, for each such $L/K$ and a subgroup $J$ of $G(L/K)$, $(P_L)^J/P_E \cong (I_L)^J/I_E$ is generated by the class of the prime of $L$ above $\mathfrak{p}$. By the argument in \S\ref{unit group section}, $\Delta(U_{L}) \cong \Delta(M_{n,0})$ for the lattice $M_{n,0}$ in Lemma \ref{explicit yak diagrams}. Hence, there is an isomorphism of $\mathbb{Z}_p[G(L/K)]$-modules  
\[ U_{L}\cong \bigl(\mathbb{Z}_p[G(L/K)]/({\sum}_{g \in G(L/K)}g)\bigr)\oplus \mathbb{Z}_p[G(L/K)]^{{\rm rk}(U_{K}) }.\]
\item[(ii)] Let $p$ and $q$ be distinct primes, with $p$ odd and $q \equiv 1 \pmod{p}$. Then, if both $q \not \equiv 1 \bmod{p^2}$ and $p$ is not a $p$-th power modulo $q$, the Burnside Basis Theorem implies that neither $p$ nor $q$ can split in the pro-$p$ extension $\mathbb{Q}_{\{p, q\}}/\mathbb{Q}$. In this case, therefore, Corollary \ref{cft remark} implies that the quantity $m_{L/K,\emptyset}$ is unbounded in the family of cyclic extensions $L/K$ with $L \subset \mathbb{Q}_{\{p, q\}}$ since each such extension is ramified at at most two places.   
\end{itemize}
\end{examples}

\begin{remark}\label{end remark} The mutual congruence conditions on $p$ and $q$ in Examples \ref{last exams}(iii) also arise in the theory of central extensions of number fields (cf. \cite[Th. 5.2]{Frohlich}). The following observation, which we have not been able to find in the literature, is thus perhaps also of interest beyond ensuring the existence of independent Minkowski units. \end{remark}

\begin{proposition}
For each odd prime $p$, there are infinitely many primes $q \equiv 1 \pmod{p}$ such that both $q \not \equiv 1 \pmod{p^2}$ and $p$ is not a $p$-th power modulo $q$.
\end{proposition}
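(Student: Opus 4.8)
The plan is to encode the three conditions on $q$ as splitting conditions in a single Galois extension of $\mathbb{Q}$ and then apply the Chebotarev Density Theorem. Set $F := \mathbb{Q}(\zeta_p)$, $L_1 := \mathbb{Q}(\zeta_{p^2})$, $L_2 := \mathbb{Q}(\zeta_p,\sqrt[p]{p})$ and $M := L_1 L_2$. Since $L_1/\mathbb{Q}$ is abelian and $L_2/\mathbb{Q}$ is the splitting field of $x^p-p$, both are Galois, hence so is $M/\mathbb{Q}$, and $M/\mathbb{Q}$ is unramified outside $p$. Write $G := G(M/\mathbb{Q})$.

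First I would establish the relevant group theory. Both $L_1/F$ and $L_2/F$ are cyclic of degree $p$: for $L_2$ this holds because $p$ is not a $p$-th power in $F^{\times}$ (the unique prime $\mathfrak{p}$ of $F$ above $p$ has $v_{\mathfrak{p}}(p)=p-1$, which is prime to $p$), so $x^p-p$ stays irreducible over $F$. These two degree-$p$ extensions of $F$ are distinct, since $G(L_1/\mathbb{Q})\cong(\mathbb{Z}/p^2)^{\times}$ is cyclic whereas, for $p$ odd, $G(L_2/\mathbb{Q})$ is the non-abelian group of order $p(p-1)$ of affine transformations $x\mapsto ax+b$ of $\mathbb{F}_p$. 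As $[L_i:F]=p$ is prime, it follows that $L_1\cap L_2=F$, so $N:=G(M/F)\cong \mathbb{Z}/p\times\mathbb{Z}/p$, and the subgroups $H_1:=G(M/L_1)$ and $H_2:=G(M/L_2)$ each have order $p$, with $H_1\cap H_2=1$ and $H_1H_2=N$. Finally, since $F/\mathbb{Q}$, $L_1/\mathbb{Q}$ and $L_2/\mathbb{Q}$ are all Galois, each of $N$, $H_1$, $H_2$ is a normal subgroup of $G$.

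Next I would fix a conjugacy-stable target set. Put $\Sigma := N\setminus(H_1\cup H_2)$. Then $\Sigma$ is a union of $G$-conjugacy classes, because $N$, $H_1$ and $H_2$ are normal in $G$, and it is non-empty since $|\Sigma| = p^2-(2p-1) = (p-1)^2>0$. By the Chebotarev Density Theorem applied to $M/\mathbb{Q}$, there are infinitely many primes $q$, necessarily unramified in $M$ and in particular with $q\neq p$, whose Frobenius class $\mathrm{Fr}_q$ in $G$ is contained in $\Sigma$. For any such $q$: the condition $\mathrm{Fr}_q\subseteq N$ says $q$ splits completely in $F=M^N$, i.e. $q\equiv 1\pmod p$; the condition $\mathrm{Fr}_q\not\subseteq H_1$ says $q$ does not split completely in $L_1=M^{H_1}=\mathbb{Q}(\zeta_{p^2})$, i.e. $q\not\equiv 1\pmod{p^2}$; and the condition $\mathrm{Fr}_q\not\subseteq H_2$ says $q$ does not split completely in $L_2=M^{H_2}=F(\sqrt[p]{p})$. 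Given $q\equiv 1\pmod p$, so that $\mathbb{F}_q$ contains the $p$-th roots of unity, the polynomial $x^p-p$ modulo $q$ either has no root or splits into distinct linear factors, the latter occurring precisely when $p$ is a $p$-th power modulo $q$; hence "$q$ does not split completely in $L_2$" is equivalent to "$p$ is not a $p$-th power modulo $q$". This yields infinitely many primes $q$ with all three required properties.

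The only genuinely non-formal points are the structural claim $\mathbb{Q}(\zeta_{p^2})\neq\mathbb{Q}(\zeta_p,\sqrt[p]{p})$ — which is exactly where oddness of $p$ enters, via the non-abelianness of $G(L_2/\mathbb{Q})$ — and the elementary dictionary between complete splitting of $q$ in $F(\sqrt[p]{p})$ and $p$ being a $p$-th power modulo $q$. Once these are in place the proof is just Chebotarev together with the normal-subgroup bookkeeping, so I do not expect a serious obstacle.
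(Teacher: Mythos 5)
Your proof is correct, and its overall architecture coincides with the paper's: both work in the compositum $M=\mathbb{Q}(\zeta_{p^2},\sqrt[p]{p})$, use the non-abelianness of $\mathbb{Q}(\zeta_p,\sqrt[p]{p})/\mathbb{Q}$ to see that $G(M/\mathbb{Q}(\zeta_p))\cong(\mathbb{Z}/p)^2$, and choose $q$ (via Chebotarev) whose Frobenius lies in $G(M/\mathbb{Q}(\zeta_p))$ but restricts non-trivially to both $G(\mathbb{Q}(\zeta_{p^2})/\mathbb{Q}(\zeta_p))$ and $G(\mathbb{Q}(\zeta_p,\sqrt[p]{p})/\mathbb{Q}(\zeta_p))$. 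The genuine divergence is in the final translation of the second non-triviality condition into ``$p$ is not a $p$-th power mod $q$'': the paper routes this through the Gras--Munnier Theorem, arguing that non-triviality of the Frobenius in $G(\mathbb{Q}(\zeta_p,\sqrt[p]{p})/\mathbb{Q}(\zeta_p))$ (the governing field) forbids the existence of a cyclic degree-$p$ extension of $\mathbb{Q}$ ramified precisely at $q$ in which $p$ splits, and then identifies that extension with the degree-$p$ subfield of $\mathbb{Q}(\zeta_q)$; you instead use the elementary Kummer--Dedekind dictionary, namely that for $q\equiv 1\pmod p$ the prime $q$ splits completely in the splitting field of $x^p-p$ exactly when $x^p-p$ has a root mod $q$. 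Your route is more self-contained and avoids invoking Gras--Munnier, at the cost of the (routine) verification that $x^p-p$ remains irreducible over $\mathbb{Q}(\zeta_p)$; the paper's route is a form of the reciprocity/governing-field philosophy that pervades the rest of the article, which is presumably why the authors chose it. Both arguments are sound, and your bookkeeping with the normal subgroups $N$, $H_1$, $H_2$ and the conjugation-stable set $\Sigma$ of cardinality $(p-1)^2$ is accurate.
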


\begin{proof}
Since $\mathbb{Q}(\sqrt[p]{p}, \zeta_p)$ is a non-abelian Galois extension of $\mathbb{Q}$, the fields $\mathbb{Q}(\sqrt[p]{p}, \zeta_p)$ and $\mathbb{Q}(\zeta_{p^2})$ are linearly disjoint over $\mathbb{Q}(\zeta_p)$ and so the group $G(\mathbb{Q}(\zeta_{p^2}, \sqrt[p]{p})/\mathbb{Q}(\zeta_p))$ is isomorphic to $(\mathbb{Z}/p)^2$. 

Let now $q$ be a rational prime such that the Frobenius automorphism $\mathrm{Fr}_{q}$ in $G(\mathbb{Q}(\zeta_{p^2}, \sqrt[p]{p})/\mathbb{Q})$ at a place of $\mathbb{Q}(\zeta_{p^2}, \sqrt[p]{p})$ above $q$ is contained in $G(\mathbb{Q}(\zeta_{p^2}, \sqrt[p]{p})/\mathbb{Q}(\zeta_p))$ and restricts to non-trivial elements in $G(\mathbb{Q}(\zeta_{p^2})/\mathbb{Q}(\zeta_p))$ and $G(\mathbb{Q}(\zeta_p, \sqrt[p]{p})/\mathbb{Q}(\zeta_p))$. Then one has $q \equiv 1 \pmod{p}$ since $\mathrm{Fr}_{q}$ belongs to $G(\mathbb{Q}(\zeta_{p^2}, \sqrt[p]{p})/\mathbb{Q}(\zeta_p))$, and also $q \not \equiv 1 \pmod{p^2}$ because $\mathrm{Fr}_{q}$ acts non-trivially on $\mathbb{Q}(\zeta_{p^2})$. On the other hand, since the restriction of $\mathrm{Fr}_{q}$ in $G(\mathbb{Q}(\zeta_p, \sqrt[p]{p})/\mathbb{Q}(\zeta_p))$ is non-trivial, the Gras-Munnier Theorem implies (via \cite[Th. V. 2.4.2]{Gras2}) that there exists no cyclic extension of $\mathbb{Q}$ of degree $p$ that is ramified precisely at $q$ and in which  $p$ splits. It follows that $p$ does not split in the degree $p$ subfield of $\mathbb{Q}(\zeta_q)$ and so $p$ cannot be a $p$-th power modulo $q$, as required.
%
\end{proof}


\end{document}